%
\documentclass [a4paper, 11pt] {article}
\usepackage{lipsum}
\usepackage{amsfonts}
\usepackage{bm}
\usepackage{amsmath}
\usepackage{graphicx,color}
\usepackage{epstopdf}
\usepackage[caption=false]{subfig}
\usepackage{float}
\usepackage{algorithmic}
\usepackage{amsopn}
\newtheorem{theorem}{Theorem}[section]
\newtheorem{corollary}[theorem]{Corollary}
\newtheorem{remark}[theorem]{Remark}
\newtheorem{lemma}[theorem]{Lemma}
\newtheorem{proposition}[theorem]{Proposition}
\newtheorem{example}[theorem]{Example}
\newenvironment{proof}[1][Proof]{\noindent\textbf{#1.} }{\ \rule{0.5em}{0.5em}}

\begin{document}
\begin{center}
{\bf \Large Convergence of a Semi-Discrete  Numerical Method for a Class of Nonlocal Nonlinear Wave Equations}
\\ ~ \\ \vspace*{20pt}
H. A. Erbay$^{1}$, S. Erbay$^{1}$,  A. Erkip$^{2}$
\vspace*{20pt}

$^{1}$Department of Natural and Mathematical Sciences, Faculty of Engineering, Ozyegin University,  Cekmekoy 34794, Istanbul, Turkey
\vspace*{20pt}

$^{2}$Faculty of Engineering and Natural Sciences, Sabanci University,  Tuzla 34956,  Istanbul,    Turkey

\end{center}

\footnote{E-mail:   husnuata.erbay@ozyegin.edu.tr, saadet.erbay@ozyegin.edu.tr, \\
albert@sabanciuniv.edu}

\begin{abstract}
In this article, we prove the convergence of a semi-discrete numerical method applied to a general class of nonlocal nonlinear wave equations where the nonlocality is introduced through the convolution operator in space. The most important characteristic of  the numerical method is that it is directly applied to the nonlocal  equation by introducing the discrete convolution operator. Starting from the continuous Cauchy problem  defined on the real line, we first construct the discrete Cauchy problem on a uniform grid of the real line. Thus the semi-discretization in space of the continuous problem gives rise to an infinite system of  ordinary differential equations in time. We show that the initial-value problem for this system is well-posed.  We prove that solutions of the discrete problem converge uniformly to those of the continuous one as the mesh size goes to zero and that  they are  second-order convergent in space.   We then consider a truncation of the infinite domain to a finite one. We prove that the solution of the truncated problem approximates the solution of the continuous problem when the truncated domain is sufficiently large.  Finally, we present some numerical experiments that confirm numerically both  the expected convergence rate of the semi-discrete scheme and the ability of the method to capture finite-time blow-up of solutions for various convolution kernels.
\end{abstract}
2010 AMS Subject Classification: 35Q74,   65M12, 65Z05, 74S30
\vspace*{20pt}

Keywords:  Nonlocal nonlinear wave equation, Discretization,  Semi-discrete scheme,  Improved Boussinesq equation,  Convergence.
\setcounter{equation}{0}
\section{Introduction}\label{sec1}

In this study we are interested in approximating solutions of the following class of nonlocal nonlinear wave equations
\begin{equation}
     u_{tt} =\left(\beta \ast  f(u)\right)_{xx},  \label{eq:cont}
\end{equation}%
where the symbol $\ast$ is used to denote the convolution operation in the spatial domain
 \begin{displaymath}
    (\beta \ast v)(x)= \int_{\mathbb{R}} \beta(x-y)v(y)\mbox{d}y
\end{displaymath}
and the kernel $\beta$ is an even function  with $\int_{\mathbb{R}}\beta(x)dx=1$.  For the initial-value problem of (\ref{eq:cont}), we present a second-order semi-discrete  scheme based on a uniform spatial discretization and prove its convergence.

Equation (\ref{eq:cont}) was proposed in \cite{Duruk2010} as a model for the propagation of strain waves in a one-dimensional, homogeneous, nonlinearly and nonlocally elastic infinite medium.  In the same paper some  local existence, global existence and blow-up results were established for  the initial-value problem of (\ref{eq:cont}).  The class (\ref{eq:cont}) covers a variety of equations from integro-differential equations to differential-difference equations that arise in lattice models \cite{Duruk2010}. Equation (\ref{eq:cont}) includes  some well-known nonlinear wave equations as a particular case. For instance, with  the exponential kernel  $\beta(x)={1\over 2}e^{-|x|}$  and $f(u)=u+g(u)$,  (\ref{eq:cont}) reduces to  the improved Boussinesq (IB) equation
\begin{equation}
    u_{tt}-u_{xx}-u_{xxtt}=(g(u))_{xx}  \label{eq:imbq}
\end{equation}
that has been widely investigated in the literature. If $\beta$ is the  Green's function for the differential operator $1-L(D^{2}_{x})$ where $D_{x}$ represents the partial derivative with respect to $x$, (\ref{eq:cont}) reduces to the higher-order IB-type equation
\begin{equation}
    u_{tt} -u_{xx}-L(D^{2}_{x})u_{tt}=g(u)_{xx}. \label{eq:nondif}
\end{equation}
We note that, in a more general case, $L$ corresponding to $\beta$ will be a pseudo-differential operator.

For the IB equation and its higher-order versions,  several numerical schemes have been  developed  in the literature. They include   finite difference schemes \cite{Bratsos2007,Wang2014,Zhang2012} and spectral methods \cite{Borluk2015,Oruc2017} among many others. Clearly, the schemes that replace derivatives by finite-differences will not work for (\ref{eq:nondif}) in the case where $L$ is a general pseudo-differential operator.   In order to solve  (\ref{eq:cont}) for a general arbitrary kernel function there has been one recent attempt \cite{Borluk2017} in which  a pseudospectral Fourier method in space has been developed. In \cite{Borluk2017} the authors have proved the convergence of the semi-discrete pseudospectral Fourier method and they have tested their method on two problems:  propagation of a single solitary wave and finite time blow-up of solutions. For both of the problems they have observed a good  agreement between the numerical and analytical results. From a practical point of view, we remark that the method proposed in \cite{Borluk2017} can be used if  the Fourier transform of the kernel function is known. This is the main drawback for the numerical method since, in the most cases, the kernel function is given  in physical space rather than Fourier space. Because of the mathematical difficulties associated with the convolution integrals involving a general arbitrary kernel function, the direct numerical approximation of the nonlocal equation (\ref{eq:cont}) is a difficult task and is highly demanding from applications'  standpoint. Additionally, to the best of our knowledge, no efforts have been made yet to solve numerically the  initial-value problem of (\ref{eq:cont}) with a general arbitrary kernel function using spatial discretization methods.  These motivate us to develop a convergent semi-discrete scheme that can be directly applied to (\ref{eq:cont}). Several  numerical studies where some other nonlocal evolution problems are solved by direct computation  are available in the literature (see, among the others, \cite{Emmrich2007b, Emmrich2007a, Du2010, Du2013, Guan2015} for a linear peridynamic model of elasticity  and \cite{Bates2009, Armstrong2010, Rossi2011} for  nonlinear nonlocal parabolic models).

In the present study we first construct a discrete initial-value problem of (\ref{eq:cont}) on a uniform grid of the real line. This is achieved by  transferring the spatial derivatives to $\beta$ and then discretizing the convolution integral. We note that our semi-discretization does not involve any spatial discrete derivative of $u$.   The semi-discrete problem is in fact an infinite dimensional system of ordinary differential equations in time, where the mesh size appears as a parameter. We consider the semi-discrete system corresponding to the continuous initial-value problem for which the solution exists on some time interval.  For sufficiently small mesh sizes, we prove that the discretized solutions exist on the same time interval and converge to the continuous solution at mesh points as the mesh size goes to zero. Moreover the error in the approximation is quadratic with respect to the mesh size. Even though both the continuous problem and the discrete problem are of infinite extent, the computations based on the semi-discrete scheme must be done in a finite domain. To resolve this issue, we consider a truncated problem on a finite interval and obtain the corresponding finite dimensional system of ordinary differential equations.  As expected, the error involved in this truncation is related to the dimension of the final system. We  prove that the error resulting from the truncation turns out to depend on the decay behavior of the original solution of the continuous problem. We then illustrate these issues in two cases; propagation of the solitary wave for the IB equation and finite-time blow-up of solutions for (\ref{eq:cont}) with various kernels. In the numerical examples we observe both the quadratic rate of convergence with respect to the mesh size and the effect of the size of the computational domain on the error.

Several points are worth noting. First, even for the IB equation or its higher order versions, our method may be more efficient than finite-difference methods since it  involves  discretization of integrals rather than  discretization of derivatives. Second, if $L$ in (\ref{eq:nondif}) is a pseudo-differential operator, the finite-difference methods may not be suitable.  Third,  one of the issues  resolved by  our method  is the computational complexity resulting from the fact that the discrete system is also nonlocal as in the continuous problem. Fourth, the method developed in  \cite{Borluk2017} needs the Fourier transform of the kernel, and numerical computation for the Fourier transform of the kernel leads to further errors. Fifth, as our approach does not utilize discretizations of spatial derivatives, a further time discretization will not involve any stability issues regarding spatial mesh size. We therefore prefer to stop at the semi-discrete level.

We also want to note that our approach can be adopted  to unidirectional nonlocal wave equations of the form
\begin{displaymath}
    u_{t} =\left( \beta \ast  f(u)\right)_{x}
\end{displaymath}
which generalize the Benjamin-Bona-Mahony (BBM) equation \cite{Benjamin1972}.  A numerical sch\-eme based on the discretization of an integral representation of the solution was used in \cite{Bona1981} to solve the BBM equation. The starting point of our numerical method is similar to that in  \cite{Bona1981}.  Also, our remark about the stability issue regarding time discretization was already observed in \cite{Bona1981}.

The rest of the paper is organized as follows. In Section \ref{sec:sec2}, we briefly describe the main features of discretization and present some preliminary lemmas.  In Section \ref{sec:sec3} we introduce the infinite-dimensional semi-discrete problem and establish the local well-posedness of the related initial-value problem. In Section \ref{sec:sec4} we prove the convergence of solutions of the semi-discrete  problem to the solutions of the continuous one  with second-order accuracy in space. By truncating the semi-discrete problem, a finite-dimensional system of ordinary differential equations  is introduced in Section \ref{sec:sec5} and it is shown that the solution of the truncated system approximates the solution of the continuous problem. In Section \ref{sec:sec6} some numerical experiments  are conducted  to verify our theoretical findings.

Throughout the paper, we use the standard notation for function spaces. The notation $\| u\|_{L^p}$ denotes the $L^p$ ($1\leq p \leq \infty$) norm of $u$ on $\mathbb{R}$. The symbol $\langle u, v\rangle$ represents the inner product of $u$ and $v$ in $L^2$. The notation $W^{k,p}(\mathbb{R})=\{ u\in L^p(\mathbb{R}): D^ju \in L^p(\mathbb{R}),~~ j \leq k \} $ denotes the $L^{p}$-based Sobolev space with the norm $\| u\|_{W^{k,p}}=\sum_{j\leq k} \|D^j u\|_{L^p},~1\leq p \leq \infty$. The symbol $H^{s}$ is the usual Sobolev space of index $s$ on $\mathbb{R}$. We will drop the symbol $\mathbb{R}$ in $\int_{\mathbb{R}}$. The symbol $C$ will stand for a generic positive constant.

\setcounter{equation}{0}
\section{Discretization and Preliminary Lemmas}\label{sec:sec2}

In this section we will derive error estimates for discretizations of integrals and derivatives on an infinite grid. On a finite interval these estimates are standard but usually depend on the length of the interval. As we are dealing with the whole space $\mathbb{R}$, we will need the estimates  in terms of $L^{p}$ norms. Below we state several lemmas whose proofs follow more or less standard lines. For completeness, the proofs are given in Appendix  \ref{sec:appendixA}.

We  use bold letter for two-sided infinite sequences $\mathbf{u}=(u_{i})_{i=-\infty}^{i=\infty}=(u_{i})$ where $u_{i}$ is the $i$th component of $\mathbf{u}$. For a fixed $h>0$ and $1\leq p<\infty $, the space $l_{h}^{p}$ is defined as
\begin{displaymath}
   l_{h}^{p}=l_{h}^{p}\left(\mathbb{Z}\right)=\left\{ (u_{i}): u_{i}\in \mathbb{R}, ~~
        \Vert \mathbf{u}\Vert_{l_{h}^{p}}^{p}=\sum_{i=-\infty }^{\infty}h|u_{i}|^{p}\right\}.
\end{displaymath}
Clearly, $l_{h}^{2}$ is a Hilbert space with inner product
\begin{displaymath}
    \langle \mathbf{u},\mathbf{v}\rangle _{l_{h}^{2}}=\sum_{i}hu_{i}v_{i}
\end{displaymath}
(here and henceforth, unless otherwise stated,  the summation index $i$ runs over the set $\mathbb{Z}$ of integers). Similarly, $l^{\infty}$ denotes the Banach space $l^{\infty}(\mathbb{Z})$ with the norm $\displaystyle \Vert \mathbf{u}\Vert_{l^{\infty}}=\sup_{i \in\mathbb{Z}} \left \vert u_{i} \right \vert$. For two sequences  $\mathbf{u}$ and $\mathbf{v}$   we define the discrete convolution as
\begin{equation}
    (\mathbf{u}\ast \mathbf{v})_{i}=\sum_{j}hu_{i-j}v_{j}. \label{eq:disc-con}
\end{equation}%
As in the continuous case, when $\mathbf{u}\in l_{h}^{1}, \mathbf{v}\in l_{h}^{p}$, $1\leq p<\infty $,  we have Young's inequality $\Vert \mathbf{u}\ast \mathbf{v}\Vert_{l_{h}^{p}}\leq \Vert \mathbf{u}\Vert _{l_{h}^{1}}\Vert \mathbf{v}\Vert _{l_{h}^{p}}.$ For $\mathbf{u}\in l_{h}^{1},\mathbf{v}\in l^{\infty }$, we also have $\Vert \mathbf{u}\ast \mathbf{v}\Vert _{l^{\infty} }\leq \Vert \mathbf{u}\Vert_{l_{h}^{1}}\Vert \mathbf{v}\Vert_{l^{\infty}}$.

We now consider the grid points $x_{i}=ih$, $i\in \mathbb{Z}$ with the mesh size $h$  over $\mathbb{R}$. We  define the restriction and extension operators between functions and sequences below \cite{Amorim2013}. For a function $u$ on $\mathbb{R}$ the restriction operator is $\mathbf{R}u=(u(x_{i}))$. When there is no danger of confusion we will write $\mathbf{u}$ instead of $ \mathbf{R}u$ and use the abbreviations $\mathbf{u}^{\prime }=\mathbf{R}u^{\prime }$,  $\mathbf{u}^{\prime\prime }=\mathbf{R}u^{\prime\prime }$ and so on. For a sequence $\mathbf{u}$ we define the piecewise constant extension operator $P_{0}(\mathbf{u})(x)=u_{i}$ for $x_{i}\leq x<x_{i+1}$, $i\in \mathbb{Z}$ and the piecewise linear extension operator
\begin{displaymath}
    P_{1}(\mathbf{u})(x)=u_{i}+\frac{u_{i+1}-u_{i}}{h}(x-x_{i})\text{\ \ for \ }x_{i}\leq x<x_{i+1}, ~~~i\in \mathbb{Z}.
\end{displaymath}
The following lemma gives discretization errors for integrals on $\mathbb{R}$.
\begin{lemma}\label{lem:lem2.1}
    Suppose $1\leq p < \infty $, $u\in W^{1,p}(\mathbb{R})$ and $\mathbf{u}=\mathbf{R}u$. Then $\mathbf{u}\in l_{h}^{p}$ and
    \begin{equation}
        \Vert u-P_{0}(\mathbf{u})\Vert _{L^{p}}\leq h\Vert u^{\prime }\Vert _{L^{p}}. \label{eq:est1}
    \end{equation}%
    Moreover, if $u\in W^{2,p}(\mathbb{R})$ then
    \begin{equation}
        \Vert u-P_{1}(\mathbf{u})\Vert _{L^{p}}\leq h^{2}\Vert u^{\prime \prime }\Vert_{L^p}.  \label{eq:est2}
    \end{equation}
\end{lemma}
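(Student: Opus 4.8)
The plan is to work cell by cell on each grid interval $I_i=[x_i,x_{i+1}]$ and then sum the resulting local $L^p$ bounds, using that both $P_0(\mathbf u)$ and $P_1(\mathbf u)$ act independently on each cell and that $\sum_i\|\,\cdot\,\|_{L^p(I_i)}^p$ reproduces the global $L^p$ norm raised to the $p$-th power.

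For (\ref{eq:est1}), on $I_i$ I would write $u(x)-u(x_i)=\int_{x_i}^{x}u'(y)\,dy$, which is valid since $W^{1,p}$ functions are absolutely continuous. Hölder's inequality (with the trivial modification at $p=1$) gives $|u(x)-u(x_i)|^p\le (x-x_i)^{p-1}\int_{I_i}|u'|^p$, and integrating in $x$ over $I_i$ yields $\int_{I_i}|u-P_0(\mathbf u)|^p\le \frac{h^p}{p}\int_{I_i}|u'|^p$. Summing over $i$ and absorbing the harmless factor $p^{-1/p}\le1$ produces (\ref{eq:est1}). The assertion $\mathbf u\in l_h^p$ then comes for free: since $P_0(\mathbf u)$ equals $u_i$ on $I_i$ we have $\|\mathbf u\|_{l_h^p}=\|P_0(\mathbf u)\|_{L^p}$, which the triangle inequality and (\ref{eq:est1}) bound by $\|u\|_{L^p}+h\|u'\|_{L^p}<\infty$.

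For (\ref{eq:est2}) the obstacle is that $u\in W^{2,p}$ need not be $C^2$, so the classical pointwise Taylor remainder for linear interpolation is unavailable. Instead I would use the exact integral (Peano-kernel) representation of the interpolation error. Writing $e=u-P_1(\mathbf u)$ on $I_i$, one has $e(x_i)=e(x_{i+1})=0$ and $e''=u''$ (weakly, since $P_1(\mathbf u)$ is affine on $I_i$), hence $e(x)=\int_{I_i}K(x,y)\,u''(y)\,dy$, where $K$ is the Dirichlet Green's function of $d^2/dx^2$ on $I_i$, a continuous piecewise-linear kernel with $|K(x,y)|\le h$. Estimating $\int_{I_i}|K(x,y)|^{p'}\,dy\le h^{p'+1}$ and applying Hölder in $y$ gives $|e(x)|\le h^{1+1/p'}\|u''\|_{L^p(I_i)}$; raising to the $p$-th power and integrating in $x$ over $I_i$ makes the exponents collapse to exactly $2p$, so $\|e\|_{L^p(I_i)}\le h^2\|u''\|_{L^p(I_i)}$ (the endpoint $p=1$ being handled identically by Fubini). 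Summing the $p$-th powers over $i$ gives (\ref{eq:est2}).

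The only genuinely delicate point is this Green's function step: one must justify the representation for merely weak second derivatives and track the Hölder exponents so that the three powers of $h$ — one from the kernel bound, one from the $y$-integration over a cell, and one from the $x$-integration over a cell — combine into precisely the quadratic rate rather than something weaker. The remaining work is the routine summation of cell-wise estimates.
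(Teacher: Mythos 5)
Your proof is correct, and for (\ref{eq:est1}) it is essentially the paper's own argument: fundamental theorem of calculus on each cell, H\"older, cell-wise summation, and the identity $\Vert P_{0}(\mathbf{u})\Vert_{L^{p}}=\Vert \mathbf{u}\Vert_{l_{h}^{p}}$ to conclude $\mathbf{u}\in l_{h}^{p}$. For (\ref{eq:est2}) your route genuinely differs from the paper's. The paper never needs a Green's function: it writes the error through the explicit identity
\begin{displaymath}
u(x)-P_{1}(\mathbf{u})(x)=\frac{1}{h}\int_{x_{i}}^{x}\int_{x_{i}}^{x_{i+1}}\int_{r}^{s}u^{\prime\prime}(\theta)\,d\theta\,dr\,ds,
\qquad x_{i}\le x<x_{i+1},
\end{displaymath}
obtained by applying the fundamental theorem of calculus twice; this is legitimate for $u\in W^{2,p}$ because $u^{\prime}$ has an absolutely continuous representative, so the obstacle you raise (unavailability of the classical $C^{2}$ Taylor remainder) never actually arises. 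From this identity the paper gets the crude pointwise bound $|u(x)-P_{1}(\mathbf{u})(x)|\le h\int_{x_{i}}^{x_{i+1}}|u^{\prime\prime}(\theta)|\,d\theta$, then applies H\"older and sums. Your Peano-kernel representation $e(x)=\int_{I_{i}}K(x,y)\,u^{\prime\prime}(y)\,dy$ is the same object in disguise (Fubini applied to the triple integral produces exactly such a kernel), but it carries an extra proof obligation that the paper's computation avoids: the uniqueness step showing that a function on $I_{i}$ with vanishing weak second derivative and zero endpoint values is identically zero. You flag this correctly, and it is standard. What each approach buys: the paper's identity is self-contained and elementary, needing nothing beyond absolute continuity of $u^{\prime}$; your kernel formulation is the standard finite-element error representation, is more modular, gives the slightly sharper bound $|K(x,y)|\le h/4$, and makes the bookkeeping of the three powers of $h$ transparent. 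Your exponent count ($1+1/p^{\prime}$ from H\"older in $y$, plus $1/p$ from the $x$-integration, giving $2p$ after raising to the $p$-th power) is accurate, as is the $p=1$ endpoint via the $L^{\infty}$ bound on the kernel.
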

\begin{remark}\label{rem:rem2.2}
    We want to emphasize that Lemma \ref{lem:lem2.1}  fails without the smoothness assumption. The following example can be given. Let
    \begin{displaymath}
        u(x)=\sum_{n=1}^{\infty}\chi_{(n-\frac{1}{n^{2}},n+\frac{1}{n^{2}})}(x),
    \end{displaymath}
    where \ $\chi _{A}$ denotes the characteristic function of the set $A$. Then $\Vert \mathbf{u}\Vert_{l_{h}^{1}}=\infty $  for any rational  $h>0$ while $u\in L^{1}(\mathbb{R})$.
\end{remark}
\begin{remark}\label{rem:rem2.3}
Clearly the integrals $\int P_{0}(\mathbf{u})(x)dx$ and $\int P_{1}(\mathbf{u})(x)dx$ correspond respectively to the rectangular and trapezoidal approximations of the integral $\int u(x)dx$ on $\mathbb{R}$. Yet we note that
\begin{displaymath}
    \int P_{0}(\mathbf{u})(x)dx=\int P_{1}(\mathbf{u})(x)dx=\sum_{i} hu(x_{i}).
\end{displaymath}
Thus (\ref{eq:est1}) and (\ref{eq:est2}) in Lemma \ref{lem:lem2.1}  can be interpreted as the discrete estimates of the integral $\int u(x)dx$ for the rectangular and trapezoidal rules, respectively.
\end{remark}
\begin{remark}\label{rem:rem2.4}
    For $p=1$, the  estimate (\ref{eq:est2})  in Lemma \ref{lem:lem2.1} extends to the case $u^{\prime \prime }=\mu $ where $\mu$ is a finite measure on $\mathbb{R}$. In that case  the estimate becomes
    \begin{displaymath}
        \Vert u-P_{1}(\mathbf{u})\Vert _{L^{1}}\leq h^{2}|\mu |(\mathbb{R}).
     \end{displaymath}
\end{remark}

Combining Remark \ref{rem:rem2.3} with Remark \ref{rem:rem2.4} we get the following bounds on the discretization of the integral on $\mathbb{R}$.
\begin{corollary}\label{cor:cor2.5}
    Let $u\in W^{1,1}(\mathbb{R})$. Then
    \begin{displaymath}
        \left\vert \int u(x)dx-\sum_{i}h u(x_{i})\right \vert \leq h\Vert u^{\prime }\Vert _{L^{1}}.
    \end{displaymath}%
    Moreover, if $u^{\prime \prime }=\mu $ is a finite measure on\ $\mathbb{R}$, then
    \begin{displaymath}
        \left\vert \int u(x)dx-\sum_{i}h u(x_{i})\right \vert \leq h^{2}|\mu|(\mathbb{R}).
    \end{displaymath}
\end{corollary}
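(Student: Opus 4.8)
The plan is to reduce both estimates to the $L^1$ bounds already furnished by Lemma \ref{lem:lem2.1} and its measure-valued extension in Remark \ref{rem:rem2.4}, using as the bridge the quadrature identity recorded in Remark \ref{rem:rem2.3}. The key observation is that the discrete sum $\sum_i h\,u(x_i)$ is \emph{exactly} the integral of either piecewise extension $P_0(\mathbf{u})$ or $P_1(\mathbf{u})$; consequently the quadrature error equals the integral of a pointwise approximation error, and the latter is precisely what the $p=1$ case of Lemma \ref{lem:lem2.1} controls.

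For the first estimate I would invoke Remark \ref{rem:rem2.3} to write $\sum_i h\,u(x_i)=\int P_{0}(\mathbf{u})(x)\,dx$, so that
\begin{displaymath}
    \left\vert \int u(x)\,dx-\sum_{i}h\,u(x_{i})\right\vert
    =\left\vert \int \bigl(u(x)-P_{0}(\mathbf{u})(x)\bigr)\,dx\right\vert
    \leq \Vert u-P_{0}(\mathbf{u})\Vert_{L^{1}}.
\end{displaymath}
Applying (\ref{eq:est1}) of Lemma \ref{lem:lem2.1} with $p=1$ then bounds the right-hand side by $h\Vert u^{\prime}\Vert_{L^{1}}$, which is the claim. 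For the second estimate I would use the other half of the identity in Remark \ref{rem:rem2.3}, namely $\sum_i h\,u(x_i)=\int P_{1}(\mathbf{u})(x)\,dx$, and repeat verbatim with $P_{1}$ in place of $P_{0}$; the quadrature error is then at most $\Vert u-P_{1}(\mathbf{u})\Vert_{L^{1}}$, and the measure-valued version of (\ref{eq:est2}) stated in Remark \ref{rem:rem2.4} delivers the bound $h^{2}|\mu|(\mathbb{R})$.

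Since the argument is a direct concatenation of facts established earlier in the section, there is no substantive obstacle; the only points deserving a moment's care are that the hypotheses align exactly with what the cited results require — $u\in W^{1,1}(\mathbb{R})$ is precisely the input for the $p=1$ case of Lemma \ref{lem:lem2.1}, while $u^{\prime\prime}=\mu$ a finite measure is precisely the input for Remark \ref{rem:rem2.4} — and that the finiteness of the total variation $|\mu|(\mathbb{R})$ (respectively of $\Vert u^{\prime}\Vert_{L^{1}}$) guarantees the bounds are meaningful. One should also note in passing that $u\in W^{1,1}(\mathbb{R})$ already forces $u(x)\to 0$ as $|x|\to\infty$, so the restriction $\mathbf{u}=\mathbf{R}u$ is well defined and $\mathbf{u}\in l_h^1$ by Lemma \ref{lem:lem2.1}, ensuring the series $\sum_i h\,u(x_i)$ converges absolutely.
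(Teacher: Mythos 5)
Your proposal is correct and is precisely the paper's own argument: the paper proves this corollary by combining the quadrature identity of Remark \ref{rem:rem2.3} with the $p=1$ estimate (\ref{eq:est1}) of Lemma \ref{lem:lem2.1} and the measure-valued bound of Remark \ref{rem:rem2.4}, exactly as you do. Your closing observations about the hypotheses matching and the absolute convergence of $\sum_i h\,u(x_i)$ are sound and require no changes.
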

We note that Corollary \ref{cor:cor2.5} covers the regular case $u\in W^{2,1}(\mathbb{R})$ where $d\mu =u^{\prime\prime} dx$.

On $l_{h}^{p}$ we define the difference operators
\begin{equation}
    (D^{+}\mathbf{u})_{i}=\frac{1}{h}(u_{i+1}-u_{i}),~~~~(D^{-}\mathbf{u})_{i}=\frac{1}{h}(u_{i}-u_{i-1})
\end{equation}%
for which we have $\langle D^{\pm }\mathbf{u},\mathbf{v}\rangle _{l_{h}^{2}}=-\langle \mathbf{u},D^{\mp }\mathbf{v}\rangle _{l_{h}^{2}}$.  Moreover, a direct computation shows that the discrete convolution (\ref{eq:disc-con}) commutes with the difference operators $D^{\pm }$,
\begin{displaymath}
    D^{\pm }(\mathbf{u}\ast \mathbf{v})=(D^{\pm }\mathbf{u})\ast \mathbf{v}=\mathbf{u}\ast (D^{\pm }\mathbf{v}).
\end{displaymath}

The following two lemmas give  $l^{\infty}$ estimates on the discrete derivatives.
\begin{lemma}\label{lem:lem2.6}
    Let $u\in W^{2,\infty}(\mathbb{R})$, $\mathbf{u}=\mathbf{R}u$ and $\mathbf{u}^{\prime }=\mathbf{R}u^{\prime }.$ Then
    \begin{displaymath}
        \Vert D^{\pm }\mathbf{u}-\mathbf{u}^{\prime }\Vert_{l^{\infty}}\leq \frac{h}{2}\Vert u^{\prime \prime }\Vert_{L^{\infty}}.
    \end{displaymath}
\end{lemma}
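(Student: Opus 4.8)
The plan is to reduce the claim to a pointwise estimate at each grid node and then apply Taylor's theorem in integral-remainder form. Fix $i\in\mathbb{Z}$ and treat $D^{+}$ first; the argument for $D^{-}$ is symmetric. By definition $(D^{+}\mathbf{u})_{i}=\frac{1}{h}\bigl(u(x_{i+1})-u(x_{i})\bigr)$, so the quantity I need to control is $\frac{1}{h}\bigl(u(x_{i+1})-u(x_{i})\bigr)-u'(x_{i})$, uniformly in $i$.

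Since $u\in W^{2,\infty}(\mathbb{R})$, both $u$ and $u'$ have absolutely continuous (indeed Lipschitz) representatives, so I can apply the fundamental theorem of calculus twice:
\begin{displaymath}
    u(x_{i+1})-u(x_{i})=\int_{x_{i}}^{x_{i+1}}u'(t)\,dt,
    \qquad
    u'(t)=u'(x_{i})+\int_{x_{i}}^{t}u''(s)\,ds .
\end{displaymath}
Substituting the second identity into the first and interchanging the order of integration (Fubini) produces the integral form of the Taylor remainder,
\begin{displaymath}
    u(x_{i+1})-u(x_{i})=h\,u'(x_{i})+\int_{x_{i}}^{x_{i+1}}(x_{i+1}-s)\,u''(s)\,ds .
\end{displaymath}

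Dividing by $h$ and rearranging yields the exact error representation
\begin{displaymath}
    (D^{+}\mathbf{u})_{i}-u'(x_{i})=\frac{1}{h}\int_{x_{i}}^{x_{i+1}}(x_{i+1}-s)\,u''(s)\,ds .
\end{displaymath}
Bounding the integrand by $\|u''\|_{L^{\infty}}$ and using $\int_{x_{i}}^{x_{i+1}}(x_{i+1}-s)\,ds=h^{2}/2$ gives $\bigl|(D^{+}\mathbf{u})_{i}-u'(x_{i})\bigr|\le\frac{h}{2}\|u''\|_{L^{\infty}}$. Because this bound is independent of $i$, taking the supremum over $i\in\mathbb{Z}$ delivers the stated $l^{\infty}$ estimate; the $D^{-}$ case follows verbatim using the interval $[x_{i-1},x_{i}]$ and the weight $(s-x_{i-1})$. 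I do not anticipate a genuine obstacle here—the only point requiring mild care is justifying the integral representation for merely $W^{2,\infty}$ rather than $C^{2}$ data, which is precisely what the absolute-continuity form of the fundamental theorem of calculus supplies.
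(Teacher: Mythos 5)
Your proof is correct and follows essentially the same route as the paper: both arguments rest on the exact error representation $(D^{+}\mathbf{u})_{i}-u'(x_{i})=\frac{1}{h}\int_{x_{i}}^{x_{i+1}}(x_{i+1}-s)\,u''(s)\,ds$, bounded by $\frac{h}{2}\Vert u''\Vert_{L^{\infty}}$ since the kernel integrates to $h^{2}/2$. The only difference is that you derive this identity carefully (fundamental theorem of calculus twice plus Fubini, valid for $W^{2,\infty}$ data), whereas the paper states it without derivation; that is a welcome but inessential elaboration.
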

We next consider the second-order difference operator defined as
\begin{displaymath}
    \left(D^{+}D^{-}\mathbf{u}\right)_{i}={1\over h^{2}}\left(u_{i+1}-2u_{i}+u_{i-1}\right).
\end{displaymath}
Clearly $D^{+}D^{-}=D^{-}D^{+}$.
\begin{lemma}\label{lem:lem2.7}
    Let $u\in W^{4,\infty}(\mathbb{R})$, $\mathbf{u}=\mathbf{R}u$ and $\mathbf{u}^{\prime \prime }=\mathbf{R}u^{\prime \prime }$. Then
    \begin{displaymath}
        \Vert D^{+}D^{-}\mathbf{u}-\mathbf{u}^{\prime \prime }\Vert _{l^{\infty}}\leq \frac{h^{2}}{12}\Vert u^{(4)}\Vert _{L^{\infty}}.
    \end{displaymath}
\end{lemma}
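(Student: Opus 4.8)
The plan is to reduce the claim to a pointwise estimate at a single grid point and then take the supremum over $i$. Fix $i$ and set $x_i=ih$. Since $\left(D^{+}D^{-}\mathbf{u}\right)_{i}=\frac{1}{h^{2}}\bigl(u(x_i+h)-2u(x_i)+u(x_i-h)\bigr)$, the whole matter comes down to controlling the quantity $u(x_i+h)+u(x_i-h)-2u(x_i)-h^{2}u''(x_i)$ by $\tfrac{h^{4}}{12}\Vert u^{(4)}\Vert_{L^\infty}$. The natural tool is Taylor's theorem in \emph{integral-remainder} form rather than Lagrange form, because the hypothesis $u\in W^{4,\infty}(\mathbb{R})$ gives only that $u'''$ is Lipschitz (absolutely continuous) with $u^{(4)}\in L^\infty$; the integral remainder is valid under exactly this regularity via the fundamental theorem of calculus applied to $u'''$.

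Concretely, I would introduce the even auxiliary function $\phi(s)=u(x_i+s)+u(x_i-s)$, so that $\phi(0)=2u(x_i)$, $\phi'(0)=0$, and $\phi''(s)=u''(x_i+s)+u''(x_i-s)$. Integrating twice and using Fubini to collapse the double integral yields the exact representation
\begin{equation}
    u(x_i+h)+u(x_i-h)-2u(x_i)=\int_{0}^{h}(h-\tau)\,\phi''(\tau)\,d\tau. \nonumber
\end{equation}
Since $\int_{0}^{h}(h-\tau)\,d\tau=\tfrac{h^{2}}{2}$, subtracting $h^{2}u''(x_i)=\int_{0}^{h}(h-\tau)\cdot 2u''(x_i)\,d\tau$ gives
\begin{equation}
    u(x_i+h)+u(x_i-h)-2u(x_i)-h^{2}u''(x_i)=\int_{0}^{h}(h-\tau)\bigl[u''(x_i+\tau)+u''(x_i-\tau)-2u''(x_i)\bigr]\,d\tau. \nonumber
\end{equation}

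The key observation is that the bracketed term is precisely the same second-difference expression applied to $u''$ instead of $u$, so applying the identity once more (now with $u^{(4)}$) gives $u''(x_i+\tau)+u''(x_i-\tau)-2u''(x_i)=\int_{0}^{\tau}(\tau-\sigma)\bigl[u^{(4)}(x_i+\sigma)+u^{(4)}(x_i-\sigma)\bigr]\,d\sigma$, whose absolute value is at most $2\Vert u^{(4)}\Vert_{L^\infty}\int_{0}^{\tau}(\tau-\sigma)\,d\sigma=\Vert u^{(4)}\Vert_{L^\infty}\tau^{2}$. Substituting this bound and evaluating $\int_{0}^{h}(h-\tau)\tau^{2}\,d\tau=\tfrac{h^{4}}{12}$ produces $\bigl|u(x_i+h)+u(x_i-h)-2u(x_i)-h^{2}u''(x_i)\bigr|\le\tfrac{h^{4}}{12}\Vert u^{(4)}\Vert_{L^\infty}$; dividing by $h^{2}$ and taking the supremum over $i$ finishes the proof.

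I expect the only genuine subtlety to be the justification of the integral-remainder formula and of the Fubini interchange under the Sobolev hypothesis $u\in W^{4,\infty}$ (as opposed to classical $C^{4}$); this is handled by noting that $u'''$ is absolutely continuous so that all the iterated integrals of $u^{(4)}\in L^\infty$ are legitimate. Everything else is a bookkeeping computation, and the constant $\tfrac{1}{12}$ emerges sharply from $\int_{0}^{h}(h-\tau)\tau^{2}\,d\tau$. This mirrors the structure already used for the first-order estimate in Lemma \ref{lem:lem2.6}.
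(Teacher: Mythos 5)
Your proof is correct and takes essentially the same route as the paper: the paper writes the error as $\frac{1}{h^{2}}\int_{x_{i-1}}^{x_{i+1}}Q(s)\,u^{(4)}(s)\,ds$ with the piecewise-cubic Peano kernel $Q$, which is exactly what your iterated double integral becomes after one Fubini interchange, since $\int_{\sigma}^{h}(h-\tau)(\tau-\sigma)\,d\tau=\frac{1}{6}(h-\sigma)^{3}$. The only difference is presentational: you derive this representation from scratch via the symmetrized function $\phi$ and two applications of the fundamental theorem of calculus (which also cleanly justifies the $W^{4,\infty}$ regularity issue the paper leaves implicit), while the paper asserts it directly from Taylor expansion; both give the sharp constant $\frac{h^{2}}{12}$.
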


From the above representations of the difference operators, we get similar estimates in the $l_{h}^{p}$ norms.
\begin{lemma}\label{lem:lem2.8}
    Let $u\in W^{2, p}(\mathbb{R})$, $\mathbf{u}=\mathbf{R}u$ and $\mathbf{u}^{\prime }=\mathbf{R}u^{\prime }.$ Then
    \begin{displaymath}
        \Vert D^{\pm }\mathbf{u}-\mathbf{u}^{\prime }\Vert _{l_{h}^{p}}\leq C h\Vert u^{\prime \prime }\Vert _{L^{P}}.
    \end{displaymath}
\end{lemma}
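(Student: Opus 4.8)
The plan is to follow the same route as Lemma~\ref{lem:lem2.6}, but to replace the pointwise $l^{\infty}$ control by a cellwise $L^{p}$ control extracted from the integral form of Taylor's theorem. Writing $x_{i}=ih$, for the forward difference I would start from the exact identity
\[
(D^{+}\mathbf{u})_{i}-u'(x_{i})=\frac{u(x_{i+1})-u(x_{i})}{h}-u'(x_{i})=\frac{1}{h}\int_{x_{i}}^{x_{i+1}}(x_{i+1}-s)\,u''(s)\,ds,
\]
which is valid for $u\in W^{2,p}(\mathbb{R})$ since $u'$ is absolutely continuous. The decisive feature of this representation is that the error at the node $x_{i}$ is written purely as an integral of $u''$ over the single cell $[x_{i},x_{i+1}]$, and that these cells partition $\mathbb{R}$.

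Next I would estimate each term. Since $0\le x_{i+1}-s\le h$ on $[x_{i},x_{i+1}]$, dropping the weight and applying H\"older's inequality with exponents $p$ and $p'=p/(p-1)$ gives
\[
\bigl|(D^{+}\mathbf{u})_{i}-u'(x_{i})\bigr|\le\int_{x_{i}}^{x_{i+1}}|u''(s)|\,ds\le h^{1/p'}\left(\int_{x_{i}}^{x_{i+1}}|u''(s)|^{p}\,ds\right)^{1/p}.
\]
Raising this to the $p$-th power and using $p/p'=p-1$ yields $\bigl|(D^{+}\mathbf{u})_{i}-u'(x_{i})\bigr|^{p}\le h^{p-1}\int_{x_{i}}^{x_{i+1}}|u''(s)|^{p}\,ds$. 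Summing against the weight $h$ and invoking that the cells tile the line,
\[
\|D^{+}\mathbf{u}-\mathbf{u}'\|_{l_{h}^{p}}^{p}=\sum_{i}h\,\bigl|(D^{+}\mathbf{u})_{i}-u'(x_{i})\bigr|^{p}\le h^{p}\sum_{i}\int_{x_{i}}^{x_{i+1}}|u''(s)|^{p}\,ds=h^{p}\|u''\|_{L^{p}}^{p},
\]
so taking $p$-th roots gives the claim (in fact with $C=1$). The backward difference $D^{-}$ is handled identically, the remainder integral now running over $[x_{i-1},x_{i}]$.

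The argument is essentially routine, so I do not anticipate a genuine obstacle; the only points that require care are (i) justifying the integral Taylor remainder for merely $W^{2,p}$ rather than $C^{2}$ data, which follows from the absolute continuity of $u'$, and (ii) the bookkeeping of the H\"older exponents, where the factor $h^{1/p'}$ coming from the cell length is precisely what upgrades the one-cell estimate to the global $h^{p}$ bound after summation over all $i$. No sharp constant is sought here, in contrast to the $l^{\infty}$ estimate of Lemma~\ref{lem:lem2.6}; the crude bound $|x_{i+1}-s|\le h$ already produces an admissible $C$, and the resulting $O(h)$ scaling matches that of the pointwise estimate.
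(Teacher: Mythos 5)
Your proof is correct and is essentially the paper's own argument: the paper gives no separate proof of Lemma \ref{lem:lem2.8}, stating only that it follows ``from the above representations of the difference operators'', i.e.\ from the same Taylor-remainder identity $(D^{+}\mathbf{u})_{i}-(\mathbf{u}^{\prime})_{i}=\frac{1}{h}\int_{x_{i}}^{x_{i+1}}(x_{i+1}-s)u^{\prime\prime}(s)\,ds$ used in the proof of Lemma \ref{lem:lem2.6}, and your cellwise H\"older estimate followed by summation over the cells that tile $\mathbb{R}$ is exactly the intended completion of that remark. The constant $C=1$ you obtain is admissible (indeed sharper than the unspecified $C$ in the statement), and your handling of the $W^{2,p}$ regularity via absolute continuity of $u^{\prime}$ is the right justification.
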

\begin{lemma}\label{lem:lem2.9}
    Let $u\in W^{4, p}(\mathbb{R})$, $\mathbf{u}=\mathbf{R}u$ and $\mathbf{u}^{\prime \prime }=\mathbf{R}u^{\prime \prime }$.
    Then
    \begin{displaymath}
        \Vert D^{+}D^{-}\mathbf{u}-\mathbf{u}^{\prime \prime}\Vert_{l_{h}^{p}}\leq C h^{2}\Vert u^{(4)}\Vert_{L^{p}}.
    \end{displaymath}
\end{lemma}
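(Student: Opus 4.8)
The plan is to mirror the argument one would use for Lemma~\ref{lem:lem2.8}, promoting the pointwise (Peano-kernel) identity behind Lemma~\ref{lem:lem2.7} to an $l_h^p$ estimate via Lemma~\ref{lem:lem2.1}, rather than by taking a supremum. First I would record the integral-remainder form of the central second difference: for $u\in W^{4,p}(\mathbb{R})$ and each grid point $x_i$, Taylor's theorem with integral remainder gives
\begin{equation*}
(D^+D^-\mathbf{u})_i-u''(x_i)=\frac{1}{6h^2}\int_{-h}^{h}(h-|\tau|)^3\,u^{(4)}(x_i+\tau)\,d\tau .
\end{equation*}
This is exactly the identity whose supremum over $i$ yields Lemma~\ref{lem:lem2.7}. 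The crucial observation is that the right-hand side is the sampling at $x_i$ of a convolution: setting $\Phi_h(z)=\frac{1}{6h^2}(h-|z|)_+^3$, an even, Lipschitz, compactly supported kernel, we have $(D^+D^-\mathbf{u})_i-u''(x_i)=F(x_i)$ where $F:=\Phi_h\ast u^{(4)}$.

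Next I would estimate $\|\mathbf{R}F\|_{l_h^p}$. Since $\Phi_h\in W^{1,1}(\mathbb{R})$ and $u^{(4)}\in L^p(\mathbb{R})$, Young's inequality shows $F\in L^p$ with $F'=\Phi_h'\ast u^{(4)}\in L^p$, so $F\in W^{1,p}(\mathbb{R})$ and Lemma~\ref{lem:lem2.1} applies to $F$. Because $\|\mathbf{R}F\|_{l_h^p}=\|P_0(\mathbf{R}F)\|_{L^p}$, estimate (\ref{eq:est1}) gives
\begin{equation*}
\|D^+D^-\mathbf{u}-\mathbf{u}''\|_{l_h^p}=\|\mathbf{R}F\|_{l_h^p}\leq \|F\|_{L^p}+h\|F'\|_{L^p}.
\end{equation*}
Applying Young's inequality once more to each term, $\|F\|_{L^p}\leq\|\Phi_h\|_{L^1}\|u^{(4)}\|_{L^p}$ and $\|F'\|_{L^p}\leq\|\Phi_h'\|_{L^1}\|u^{(4)}\|_{L^p}$, and a direct computation of the kernel norms $\|\Phi_h\|_{L^1}=\frac{h^2}{12}$, $\|\Phi_h'\|_{L^1}=\frac{h}{3}$ then yields
\begin{equation*}
\|D^+D^-\mathbf{u}-\mathbf{u}''\|_{l_h^p}\leq\Bigl(\tfrac{1}{12}+\tfrac{1}{3}\Bigr)h^2\,\|u^{(4)}\|_{L^p},
\end{equation*}
which is the claim with $C=\tfrac{5}{12}$.

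The step I expect to be the real obstacle, and the reason for routing the estimate through a convolution, is the temptation to bound the integral representation directly by Jensen's or Minkowski's integral inequality. Doing so reduces matters to controlling the shifted Riemann sum $\sum_i h\,|u^{(4)}(x_i+\tau)|^p$ uniformly in $\tau\in[-h,h]$; bounding this by a constant times $\|u^{(4)}\|_{L^p}^p$ through Lemma~\ref{lem:lem2.1} would cost an extra derivative, i.e.\ it would require $u^{(4)}\in W^{1,p}$ (equivalently $u\in W^{5,p}$), more than the hypothesis allows. Recasting the error as the sampled convolution $\mathbf{R}(\Phi_h\ast u^{(4)})$ removes this difficulty: the smoothing is supplied by the fixed kernel $\Phi_h$, so $F=\Phi_h\ast u^{(4)}$ lies in $W^{1,p}$ for $u^{(4)}$ merely in $L^p$, and Lemma~\ref{lem:lem2.1} can be applied to $F$ instead of to $u^{(4)}$. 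A secondary point to verify is the distributional identity $(\Phi_h\ast u^{(4)})'=\Phi_h'\ast u^{(4)}$, which holds because $\Phi_h$ is Lipschitz with compact support; everything else is the routine computation of the two $L^1$ kernel norms. The same scheme, with the first-order Peano kernel in place of $\Phi_h$, proves Lemma~\ref{lem:lem2.8}.
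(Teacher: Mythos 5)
Your proof is correct, but it completes the argument differently from the paper. The paper gives no written proof of Lemma \ref{lem:lem2.9}: it only says the $l_h^p$ estimates follow ``from the above representations of the difference operators'', i.e.\ from the same Peano-kernel identity you wrote down (in the paper's notation, $(D^+D^-\mathbf{u})_i-(\mathbf{u}'')_i=\frac{1}{h^2}\int_{x_{i-1}}^{x_{i+1}}Q(s)u^{(4)}(s)\,ds$, with $Q(s)=\frac{1}{6}(s-x_{i-1})^3$ on $[x_{i-1},x_i)$ and $Q(s)=\frac{1}{6}(x_{i+1}-s)^3$ on $[x_i,x_{i+1})$). The intended completion is the standard one: apply H\"older (or Jensen) on each cell with weight $Q$, giving $|(D^+D^-\mathbf{u})_i-(\mathbf{u}'')_i|^p\le h^{-2p}\bigl(\int_{x_{i-1}}^{x_{i+1}}Q\,ds\bigr)^{p/q}\int_{x_{i-1}}^{x_{i+1}}Q\,|u^{(4)}|^p\,ds$ ($q$ the dual exponent), then multiply by $h$ and sum over $i$, using $\int_{x_{i-1}}^{x_{i+1}}Q\,ds=\frac{h^4}{12}$, $Q\le\frac{h^3}{6}$, and the fact that each point of $\mathbb{R}$ lies in at most two of the intervals $[x_{i-1},x_{i+1}]$; the powers of $h$ combine to exactly $h^{2p}$. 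You instead recast the error as the sampled convolution $F=\Phi_h\ast u^{(4)}$ and reuse Lemma \ref{lem:lem2.1} together with Young's inequality. This is valid: evenness of $\Phi_h$ absorbs the reflection in the convolution, $F\in W^{1,p}$ with $F'=\Phi_h'\ast u^{(4)}$ since $\Phi_h$ is Lipschitz with compact support, and your kernel norms $\|\Phi_h\|_{L^1}=\frac{h^2}{12}$, $\|\Phi_h'\|_{L^1}=\frac{h}{3}$ are right, so $\|\mathbf{R}F\|_{l_h^p}\le\|F\|_{L^p}+h\|F'\|_{L^p}\le\frac{5}{12}h^2\|u^{(4)}\|_{L^p}$. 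What your route buys is a structural reuse of already-proved results (Lemma \ref{lem:lem2.1} plus Young) and an explicit exhibition of the smoothing role of the Peano kernel; what the direct route buys is brevity and no need to differentiate the kernel or verify the convolution identity.

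One point needs correcting, though: your stated reason for the detour --- that estimating the representation ``directly'' would force control of the shifted Riemann sums $\sum_i h\,|u^{(4)}(x_i+\tau)|^p$ uniformly in $\tau$, hence cost an extra derivative --- is mistaken. That obstruction arises only if one first interchanges the sum over $i$ with the integral over the shift variable $\tau$ (a global Minkowski step). The direct argument never does this: H\"older is applied cell by cell, so the quantity to be summed is the collection of local integrals $\int_{x_{i-1}}^{x_{i+1}}Q\,|u^{(4)}|^p\,ds$, and the finite overlap of the intervals bounds their sum by $\frac{h^3}{3}\|u^{(4)}\|_{L^p}^p$; no sampling of $u^{(4)}$ ever occurs. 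So Lemma \ref{lem:lem2.9} (and likewise Lemma \ref{lem:lem2.8}) does follow from the representation alone under exactly the stated hypothesis $u\in W^{4,p}$, which is presumably what the authors had in mind.
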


\setcounter{equation}{0}
\section{The Continuous and Discrete Cauchy Problems}\label{sec:sec3}

We will consider the Cauchy problem
\begin{align}
   & u_{tt} =(\beta \ast f(u))_{xx}\text{ \ \ \ }x\in \mathbb{R}\text{, \ \ }t>0 \label{eq:cont1} \\
   & u(x,0)=\varphi(x), ~~~~~ u_{t}(x,0)=\psi(x)  \text{\ \ \ }x\in \mathbb{R}.  \label{eq:initial}
\end{align}%
We assume that $f$ is sufficiently smooth with $f(0)=0$ and that the kernel $\beta $ satisfies the following:

\begin{enumerate}
    \item $\beta $ $\in W^{1,1}(\mathbb{R})$

    \item $\beta ^{\prime \prime }=\mu $ is a finite Borel measure on $\mathbb{R}$.
\end{enumerate}
We note that Condition 2 above also includes the more regular case $\beta $ $\in W^{2,1}(\mathbb{R})$; i.e. $\beta ^{\prime \prime }\in L^{1}(\mathbb{R})$ with $d\mu=\beta ^{\prime \prime }dx$. Moreover, these conditions imply that the Fourier transform of the kernel is of the form $\widehat{\beta }(\xi )={\cal O} \left((1+\xi^{2})^{-1}\right)$. This in turn suffices to show the local well-posedness of the Cauchy problem of (\ref{eq:cont1})-(\ref{eq:initial}).  On the other hand, for  the typical kernel $\beta(x)={1\over 2}e^{-|x|}$ we have $\beta^{\prime\prime}=\beta-\delta $ with the Dirac measure $\delta$. This explains why we impose Condition 2 (see \cite{Duruk2010} for details).
\begin{theorem}\label{theo:theo3.1} (Theorem 3.4 and Lemma 3.9 of \cite{Duruk2010})
    Let $f\in C^{\lfloor s\rfloor +1}(\mathbb{R})$ with $f(0)=0,$ $s>\frac{1}{2}$. For given $\varphi ,\psi \in H^{s}(\mathbb{R})$, there is some $T>0$ so that the initial-value problem (\ref{eq:cont1})-(\ref{eq:initial}) is locally well-posed with solution $u\in C^{2}\left([0,T],H^{s}(\mathbb{R})\right)$. Moreover, there is a global solution if and only if for any $T < \infty $ we have
    \begin{displaymath}
        \limsup_{t\rightarrow T^{-}}\left\Vert u\left( t\right) \right\Vert_{L^{\infty} }< \infty ~.
    \end{displaymath}
\end{theorem}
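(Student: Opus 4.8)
As the statement is taken from \cite{Duruk2010}, I only indicate the strategy. The plan is to recast (\ref{eq:cont1})--(\ref{eq:initial}) as a fixed-point problem in $C([0,T],H^{s})$, solve it for small $T$ by contraction, and then upgrade to the stated blow-up alternative through an $L^{\infty}$-controlled a priori estimate.

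The starting observation is that the smoothing properties of $\beta$ make the spatial operator on the right-hand side bounded on every $H^{s}$. Writing $Bw=\beta\ast w$ and $A=-\partial_x^{2}B$, the Fourier symbol of $A$ is $\xi^{2}\widehat{\beta}(\xi)$; since Conditions 1--2 give $\widehat{\beta}(\xi)=\mathcal{O}\big((1+\xi^{2})^{-1}\big)$, this symbol is bounded and hence $A\colon H^{s}\to H^{s}$ is a bounded linear operator. The Cauchy problem becomes $u_{tt}=-Af(u)$, and integrating twice in time yields the equivalent integral equation
\begin{equation*}
    u(t)=\varphi+t\psi-\int_{0}^{t}(t-\tau)\,Af(u(\tau))\,d\tau .
\end{equation*}
I would set up the fixed-point map $\Phi$ as the right-hand side on a ball of $C([0,T],H^{s})$. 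The essential nonlinear input is that, for $s>\tfrac12$, $H^{s}(\mathbb{R})$ is a Banach algebra, so that $u\mapsto f(u)$ maps $H^{s}$ to itself and is locally Lipschitz when $f\in C^{\lfloor s\rfloor+1}$ with $f(0)=0$. Together with the boundedness of $A$, the factor $(t-\tau)$ produces a $T^{2}$ prefactor in the Lipschitz estimate for $\Phi$, so $\Phi$ is a contraction once $T$ is small (depending on $\|\varphi\|_{H^{s}}+\|\psi\|_{H^{s}}$). The fixed point is the local solution; reading $u_{t}$ and $u_{tt}=-Af(u)$ off the integral equation gives $u\in C^{2}([0,T],H^{s})$, and uniqueness and continuous dependence follow from the same Lipschitz bounds.

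For the blow-up alternative I would argue by continuation. Let $T_{\max}$ be the maximal existence time and suppose $M:=\sup_{[0,T_{\max})}\|u(t)\|_{L^{\infty}}<\infty$ while $T_{\max}<\infty$. Differentiating the integral equation gives $u_{t}(t)=\psi-\int_{0}^{t}Af(u(\tau))\,d\tau$, whence, using the boundedness of $A$ together with the Moser-type composition estimate $\|f(u)\|_{H^{s}}\le C(\|u\|_{L^{\infty}})\|u\|_{H^{s}}$, the quantity $N(t)=\|u(t)\|_{H^{s}}+\|u_{t}(t)\|_{H^{s}}$ satisfies $N(t)\le N(0)+C(M)\int_{0}^{t}N(\tau)\,d\tau$. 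Gronwall's inequality then bounds $N$ uniformly on $[0,T_{\max})$, so the local existence theorem can be reapplied from a time close to $T_{\max}$ to continue the solution past $T_{\max}$, a contradiction; hence the solution is global. The reverse implication is immediate from the embedding $H^{s}\hookrightarrow L^{\infty}$ for $s>\tfrac12$. I expect the main obstacle to be the composition estimate for $f(u)$ in $H^{s}$: this is precisely where the hypothesis $f\in C^{\lfloor s\rfloor+1}$ enters, and it is what makes the whole scheme close. The fact that $A$ is merely bounded --- rather than a genuine second-order operator --- is what keeps the a priori estimate elementary, avoiding the energy machinery needed for classical wave equations.
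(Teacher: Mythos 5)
Your proposal is correct and follows essentially the same route as the paper's source: the paper (via Remark \ref{rem:blow} and Lemma \ref{lem:yeni}, citing \cite{Duruk2010}) attributes the result exactly to the $H^{s}$-valued ODE character of the equation — i.e., boundedness of $w\mapsto(\beta\ast w)_{xx}$ on $H^{s}$ coming from $\widehat{\beta}(\xi)=\mathcal{O}\left((1+\xi^{2})^{-1}\right)$ — combined with Picard/contraction and the $L^{\infty}$-controlled composition estimate $\Vert f(u)\Vert_{H^{s}}\leq C(M)\Vert u\Vert_{H^{s}}$, which is precisely your continuation argument for the blow-up alternative. Nothing essential is missing.
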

\begin{remark}\label{rem:blow}
    The proof of Theorem \ref{theo:theo3.1} relies on two main ingredients;  the  $H^{s}$-valued ODE character of (\ref{eq:cont1}) and local Lipschitz estimates for the nonlinear term. We want to emphasize the second assertion of the theorem; it implies that if blow-up occurs it should be observed in $\left\Vert u\left( t\right) \right\Vert _{L^{\infty} }$. Hence one can not have higher order singularities if the amplitude stays finite. This  is due to the $L^{\infty}$ control of the nonlinear term, given in the following lemma  \cite{Cons2002, Meyer1997}.
\end{remark}
\begin{lemma}\label{lem:yeni}
    Let $s\geq 0,$ $f\in C^{[s]+1}(\mathbb{R})$ with $f(0)=0$. Then for any $u\in H^{s}\cap L^{\infty }$, we have $f(u)\in H^{s}\cap L^{\infty }$. Moreover there is some constant $C(M)$ depending on $M$ such that for all $u\in H^{s}\cap L^{\infty }$ with $\left\Vert u\right\Vert _{L^{\infty} }\leq M$
    \begin{displaymath}
        {\left\Vert f(u)\right\Vert }_{H^{s}}\leq C(M){\left\Vert u\right\Vert }_{H^{s}}~.
    \end{displaymath}
\end{lemma}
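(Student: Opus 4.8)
The plan is to split the claim into two independent bounds: an $L^{\infty}$ estimate and a (homogeneous) $H^{s}$ estimate, the latter being a Moser-type composition estimate for the Nemytskii operator $u\mapsto f(u)$. The $L^{\infty}$ part is immediate. Since $f(0)=0$ and $f\in C^{1}$, the mean value theorem gives the pointwise bound $|f(u(x))|\leq \big(\sup_{|\xi|\leq M}|f'(\xi)|\big)\,|u(x)|$, so that $\|f(u)\|_{L^{\infty}}\leq C(M)\|u\|_{L^{\infty}}$; the same pointwise bound yields $\|f(u)\|_{L^{2}}\leq C(M)\|u\|_{L^{2}}$, which settles membership $f(u)\in L^{\infty}$ and the $L^{2}$ piece of the $H^{s}$ norm. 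Everything then reduces to controlling the derivatives of $f(u)$.

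First I would treat integer $s=m$, where $\|f(u)\|_{H^{m}}^{2}=\sum_{k\leq m}\|D^{k}f(u)\|_{L^{2}}^{2}$. The Fa\`a di Bruno formula writes $D^{k}f(u)$ as a finite sum of terms $f^{(j)}(u)\prod_{l=1}^{j}D^{a_{l}}u$ with $a_{l}\geq 1$ and $a_{1}+\cdots+a_{j}=k\leq m$. Because $f\in C^{m+1}$, each scalar factor is bounded, $|f^{(j)}(u)|\leq \sup_{|\xi|\leq M}|f^{(j)}(\xi)|\leq C(M)$ on $\{|u|\leq M\}$. The product is handled by the Gagliardo--Nirenberg inequality on $\mathbb{R}$: with the exponents $p_{l}=2k/a_{l}$ one has $\sum_{l}p_{l}^{-1}=\tfrac12$ and $\|D^{a_{l}}u\|_{L^{p_{l}}}\leq C\|u\|_{L^{\infty}}^{1-a_{l}/k}\|D^{k}u\|_{L^{2}}^{a_{l}/k}$. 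Since $\sum_l(1-a_l/k)=j-1$ and $\sum_l a_l/k=1$, H\"older's inequality gives $\big\|\prod_{l}D^{a_{l}}u\big\|_{L^{2}}\leq C\|u\|_{L^{\infty}}^{\,j-1}\|D^{k}u\|_{L^{2}}$. Absorbing powers of $\|u\|_{L^{\infty}}\leq M$ into $C(M)$ yields $\|D^{k}f(u)\|_{L^{2}}\leq C(M)\|u\|_{H^{m}}$, and summing over $k\leq m$ finishes the integer case.

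For non-integer $s$, write $s=m+\sigma$ with $m=[s]$ and $0<\sigma<1$. Besides the integer bound above for indices $\leq m$, I would control the Gagliardo--Slobodeckij seminorm of $D^{m}f(u)$, i.e. the double integral $\iint |D^{m}f(u)(x)-D^{m}f(u)(y)|^{2}|x-y|^{-1-2\sigma}\,dx\,dy$. Applying Fa\`a di Bruno again and bounding the increments of each product factor by combining a fractional chain/Leibniz rule with the interpolation bounds already used, one reduces every resulting term to a product of an $L^{\infty}$ norm of some $f^{(j)}(u)$, powers of $\|u\|_{L^{\infty}}$, and the full $H^{s}$ norm of $u$. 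Equivalently, the whole argument can be run through a Littlewood--Paley decomposition and Bony's paraproduct, which is the form in which this composition estimate is established in \cite{Meyer1997, Cons2002}; a third option is to interpolate the integer-order bounds. I would adopt whichever is cleanest and cite the composition theorem directly once the fractional bookkeeping becomes heavy.

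The main obstacle is precisely this non-integer step: there is no pointwise Leibniz/chain rule for fractional derivatives, so the estimate must be routed through either the Gagliardo seminorm or the paradifferential calculus. Throughout, the decisive point is that the hypotheses $f\in C^{[s]+1}$ and $f(0)=0$ are exactly what guarantee, respectively, the boundedness of the derivatives $f^{(j)}(u)$ on $\{|u|\leq M\}$ and the linear (not merely affine) dependence on $\|u\|_{H^{s}}$ demanded by the stated inequality.
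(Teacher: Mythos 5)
You should know at the outset that the paper contains no proof of this lemma: it is quoted as a known Moser-type composition estimate, with the citations \cite{Cons2002, Meyer1997} attached in Remark \ref{rem:blow}. So your attempt is necessarily a different route --- an actual proof --- and it is essentially correct where it is carried out. The $L^{\infty}$/$L^{2}$ step (mean value theorem plus $f(0)=0$) is right, and your integer-order argument is the classical one: Fa\`a di Bruno, boundedness of $f^{(j)}$ on $[-M,M]$, Gagliardo--Nirenberg with exponents $p_{l}=2k/a_{l}$, and H\"older; the bookkeeping $\sum_{l}(1-a_{l}/k)=j-1$, $\sum_{l}a_{l}/k=1$ is exactly what yields the bound $C(M)\Vert u\Vert_{H^{m}}$ that is linear in $\Vert u\Vert_{H^{m}}$. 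For non-integer $s$, however, your text is a plan rather than a proof, and one of the three options you list is not sound as stated: interpolating the integer-order bounds does not come for free, because $u\mapsto f(u)$ is nonlinear and composition operators need not interpolate (one would need a nonlinear interpolation theorem, not the standard linear theory). The other two options --- the Gagliardo--Slobodeckij seminorm computation and the Littlewood--Paley/paraproduct argument --- do work, the latter being precisely how the cited references prove the result. Since you end by saying you would cite the composition theorem once the fractional bookkeeping becomes heavy, your proposal effectively lands where the paper starts: the general-$s$ statement is taken from \cite{Cons2002, Meyer1997}. What your write-up buys is a self-contained, correct proof for integer $s$; what the paper's choice buys is brevity and the full range $s\geq 0$, which matters here because the main theorems only assume $s>\frac{9}{2}$, not that $s$ is an integer.
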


In order to define the related semi-discrete problem, we fix $h>0$ and discretize  (\ref{eq:cont1}) as follows:
\begin{equation}
    \frac{d^{2}\mathbf{v}}{dt^{2}} =D^{+}D^{-}\big(\bm{\beta}_{h}\ast f(\mathbf{v})\big)  \label{eq:disc}
\end{equation}%
where  $f(\mathbf{v})=(f(v_{i}))$ and $\bm{\beta}_{h}=\mathbf{R}\beta$ is the restriction (hence discretization) of the kernel $\beta $.

Noting that $D^{+}D^{-}(\bm{\beta}_{h}\ast f(\mathbf{v}))=$ $(D^{+}D^{-}\bm{\beta}_{h})\ast f(\mathbf{v})$, we first estimate $D^{+}D^{-}\bm{\beta}_{h}$.
\begin{lemma}\label{lem:lem3.2}
    $D^{+}D^{-}\bm{\beta}_{h}\in l_{h}^{1}$ and $\Vert D^{+}D^{-}\bm{\beta}_{h}\Vert _{l_{h}^{1}}\leq 2|\mu| (\mathbb{R}).$
\end{lemma}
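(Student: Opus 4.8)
The plan is to reduce the $l_h^1$ norm to a total-variation estimate on the second difference of $\beta$, and then to exploit the bounded overlap of length-$2h$ windows. Writing out the norm from its definition,
\begin{equation*}
    \Vert D^{+}D^{-}\bm{\beta}_{h}\Vert_{l_{h}^{1}}=\sum_{i}h\,\bigl|(D^{+}D^{-}\bm{\beta}_{h})_{i}\bigr|=\frac{1}{h}\sum_{i}\bigl|\beta(x_{i+1})-2\beta(x_{i})+\beta(x_{i-1})\bigr|,
\end{equation*}
so it suffices to prove that the sum $S:=\sum_{i}|\beta(x_{i+1})-2\beta(x_{i})+\beta(x_{i-1})|$ is finite and bounded by $2h\,|\mu|(\mathbb{R})$; finiteness of $S$ simultaneously yields $D^{+}D^{-}\bm{\beta}_{h}\in l_h^1$.

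The key step is to rewrite the second difference in terms of the measure $\mu$. Since $\beta\in W^{1,1}(\mathbb{R})$, $\beta$ is (a representative that is) absolutely continuous with $\beta(b)-\beta(a)=\int_a^b\beta'(s)\,ds$. Condition 2, $\beta''=\mu$, means that the right-continuous representative of $\beta'$ is of bounded variation and satisfies $\beta'(b)-\beta'(a)=\mu\bigl((a,b]\bigr)$. Changing variables so that both integrals run over $[0,h]$, I would write
\begin{equation*}
    \beta(x_{i+1})-2\beta(x_{i})+\beta(x_{i-1})=\int_{0}^{h}\bigl[\beta'(x_{i}+r)-\beta'(x_{i}+r-h)\bigr]\,dr=\int_{0}^{h}\mu\bigl((x_{i}+r-h,\,x_{i}+r]\bigr)\,dr .
\end{equation*}
Taking absolute values and using $(x_{i}+r-h,\,x_{i}+r]\subseteq(x_{i-1},x_{i+1}]$ for $r\in[0,h]$ gives $|\beta(x_{i+1})-2\beta(x_{i})+\beta(x_{i-1})|\le h\,|\mu|\bigl((x_{i-1},x_{i+1}]\bigr)$.

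Summing over $i$, I would observe that the windows $(x_{i-1},x_{i+1}]$ each have length $2h$ and that every point of $\mathbb{R}$ lies in at most two of them, so $\sum_{i}|\mu|\bigl((x_{i-1},x_{i+1}]\bigr)\le 2\,|\mu|(\mathbb{R})$. Hence $S\le 2h\,|\mu|(\mathbb{R})$, and dividing by $h$ finishes the proof. The main obstacle is the measure-theoretic bookkeeping when $\beta'$ carries atoms — this is exactly the relevant case, as for $\beta(x)=\tfrac12 e^{-|x|}$ one has $\mu=\beta\,dx-\delta$ — so the representation above must be carried out with the Stieltjes/measure formulation rather than assuming a pointwise $L^1$ density for $\beta''$, and Tonelli's theorem is used to justify interchanging the $\int_0^h$ with the summation. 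I note that a sharper argument using the disjoint tiling $\{(x_{i-1}+r,x_{i}+r]\}_i$ would even yield the constant $1$, but the cruder overlap bound above already gives the stated $2|\mu|(\mathbb{R})$.
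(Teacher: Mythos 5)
Your proof is correct and follows essentially the same route as the paper's: both express the second difference $\beta(x_{i+1})-2\beta(x_{i})+\beta(x_{i-1})$ as an integral of increments of $\beta'$ over a sliding length-$h$ interval, bound it by $h\,|\mu|\bigl((x_{i-1},x_{i+1}]\bigr)$, and sum using the fact that the windows $(x_{i-1},x_{i+1}]$ cover each point at most twice, producing the factor $2|\mu|(\mathbb{R})$. The only real difference is one of bookkeeping: the paper first treats the regular case $\beta''\in L^{1}$ and then asserts that the estimate carries over to a finite measure, whereas you run the argument directly in the Stieltjes/measure formulation (which handles the atoms of $\mu$ explicitly), and you correctly observe that the disjoint-tiling refinement would even yield the constant $1$.
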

\begin{proof}
    First we assume that $\beta ^{\prime \prime }\in L^{1}$. Then
    \begin{align}
        h^{2}(D^{+}D^{-}\bm{\beta}_{h})_{i}
        &=\beta (x_{i+1})-2\beta (x_{i})+\beta (x_{i-1}) \nonumber \\
        &=\int_{x_{i}}^{x_{i+1}}\beta^{\prime }(s)ds-\int_{x_{i-1}}^{x_{i}}\beta^{\prime }(s)ds \nonumber \\
        &=\int_{x_{i-1}}^{x_{i}}\Big(\beta ^{\prime }(s+h)-\beta ^{\prime }(s)\Big)ds \nonumber \\
        &=\int_{x_{i-1}}^{x_{i}}\int_{s}^{s+h}\beta ^{\prime \prime }(r)drds. \nonumber
    \end{align}%
    As $x_{i-1}\leq s\leq s+h\leq x_{i+1}$
    \begin{displaymath}%
        \left\vert h(D^{+}D^{-}\bm{\beta }_{h})_{i}\right\vert
            \leq {1\over h}\int_{x_{i-1}}^{x_{i}}\int_{x_{i-1}}^{x_{i+1}}\left\vert \beta^{\prime\prime}(r)\right\vert drds
            =\int_{x_{i}-1}^{x_{i}+1}\left\vert \beta^{\prime \prime }(r)\right\vert dr,
    \end{displaymath}
    \begin{displaymath}
        \Vert D^{+}D^{-}\bm{\beta }_{h}\Vert_{l_{h}^{1}}
            =\sum_{i}h \left\vert (D^{+}D^{-}\bm{\beta }_{h})_{i}\right\vert
            \leq \sum_{i}\int_{x_{i}-1}^{x_{i+1}}\left\vert \beta^{\prime \prime }(r)\right\vert dr
            \leq 2\Vert\beta ^{\prime \prime }\Vert _{L^{1}}.
    \end{displaymath}%
    When $\beta^{\prime \prime }=\mu $ is a finite measure, this estimate becomes
    $\Vert D^{+}D^{-}\bm{\beta }_{h}\Vert _{l_{h}^{1}}\leq 2|\mu| (\mathbb{R})$.
\end{proof}

\begin{theorem}\label{theo:theo3.3}
    Let $f$ \ be a locally Lipschitz function with $f(0)=0$. Then the initial-value problem for (\ref{eq:disc}) is locally well-posed for initial data $\mathbf{v}(0)$, $\mathbf{v}^{\prime}(0)$ in $l^{\infty} $. Moreover there exists some maximal time $T_{h}>0$ so that the problem has unique solution $\mathbf{v}\in C^{2}([0,T_{h}),l^{\infty})$. The maximal time $T_{h}$, if finite, is determined by the blow-up condition
    \begin{equation}
        \limsup_{t\rightarrow T_{h}^{-}}\Vert \mathbf{v}(t)\Vert_{l^{\infty} }=\infty .  \label{eq:blow2}
    \end{equation}
\end{theorem}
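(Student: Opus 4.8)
The plan is to recast \eqref{eq:disc} as a first-order autonomous ODE in the Banach space $l^{\infty}\times l^{\infty}$ and then appeal to the Picard--Lindel\"of (Cauchy--Lipschitz) existence and uniqueness theorem in its Banach-space form. Setting $\mathbf{w}=d\mathbf{v}/dt$ and using the identity $D^{+}D^{-}(\bm{\beta}_{h}\ast f(\mathbf{v}))=(D^{+}D^{-}\bm{\beta}_{h})\ast f(\mathbf{v})$, the equation becomes the system $d\mathbf{v}/dt=\mathbf{w}$, $d\mathbf{w}/dt=(D^{+}D^{-}\bm{\beta}_{h})\ast f(\mathbf{v})$, with right-hand side $F(\mathbf{v},\mathbf{w})=\big(\mathbf{w},\,(D^{+}D^{-}\bm{\beta}_{h})\ast f(\mathbf{v})\big)$. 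Everything then reduces to verifying that $F$ is Lipschitz on bounded subsets of $l^{\infty}\times l^{\infty}$.

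The core estimate is that the nonlinear map $\mathbf{v}\mapsto N(\mathbf{v}):=(D^{+}D^{-}\bm{\beta}_{h})\ast f(\mathbf{v})$ is locally Lipschitz from $l^{\infty}$ into itself, and I would split this into two factors. First, since $f$ is locally Lipschitz on $\mathbb{R}$ with $f(0)=0$, the substitution operator $\mathbf{v}\mapsto f(\mathbf{v})=(f(v_{i}))$ maps $l^{\infty}$ into $l^{\infty}$ and satisfies, for $\Vert\mathbf{v}\Vert_{l^{\infty}},\Vert\tilde{\mathbf{v}}\Vert_{l^{\infty}}\leq M$, the componentwise bound $|f(v_{i})-f(\tilde v_{i})|\leq L_{M}|v_{i}-\tilde v_{i}|$, where $L_{M}$ is the Lipschitz constant of $f$ on $[-M,M]$; hence $\Vert f(\mathbf{v})-f(\tilde{\mathbf{v}})\Vert_{l^{\infty}}\leq L_{M}\Vert\mathbf{v}-\tilde{\mathbf{v}}\Vert_{l^{\infty}}$ and, using $f(0)=0$, $\Vert f(\mathbf{v})\Vert_{l^{\infty}}\leq L_{M}M$. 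Second, by Lemma \ref{lem:lem3.2} we have $D^{+}D^{-}\bm{\beta}_{h}\in l_{h}^{1}$, so the discrete Young inequality $\Vert\mathbf{u}\ast\mathbf{z}\Vert_{l^{\infty}}\leq\Vert\mathbf{u}\Vert_{l_{h}^{1}}\Vert\mathbf{z}\Vert_{l^{\infty}}$ recorded in Section \ref{sec:sec2} shows that convolution by $D^{+}D^{-}\bm{\beta}_{h}$ is a bounded linear operator on $l^{\infty}$ with norm at most $2|\mu|(\mathbb{R})$. Composing the two factors gives $\Vert N(\mathbf{v})-N(\tilde{\mathbf{v}})\Vert_{l^{\infty}}\leq 2|\mu|(\mathbb{R})\,L_{M}\,\Vert\mathbf{v}-\tilde{\mathbf{v}}\Vert_{l^{\infty}}$, so $F$ is locally Lipschitz. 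The abstract theory then yields a unique maximal solution $(\mathbf{v},\mathbf{w})\in C^{1}([0,T_{h}),l^{\infty}\times l^{\infty})$, together with continuous dependence on the initial data through the Gronwall estimate inherent in the contraction argument; and since $d\mathbf{w}/dt=N(\mathbf{v})$ is continuous in $t$, we recover $\mathbf{v}\in C^{2}([0,T_{h}),l^{\infty})$.

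It remains to identify the blow-up condition \eqref{eq:blow2}. The Banach-space continuation theorem guarantees that if $T_{h}<\infty$ then the solution leaves every bounded set, i.e. $\limsup_{t\to T_{h}^{-}}\big(\Vert\mathbf{v}(t)\Vert_{l^{\infty}}+\Vert\mathbf{w}(t)\Vert_{l^{\infty}}\big)=\infty$. The one point needing care, which I regard as the only genuine step beyond routine ODE theory, is upgrading this to a statement about $\Vert\mathbf{v}\Vert_{l^{\infty}}$ alone. For this I would argue by contradiction: if $\Vert\mathbf{v}(t)\Vert_{l^{\infty}}\leq M$ on $[0,T_{h})$, then by the bound just established $\Vert d\mathbf{w}/dt\Vert_{l^{\infty}}=\Vert N(\mathbf{v}(t))\Vert_{l^{\infty}}\leq 2|\mu|(\mathbb{R})L_{M}M$, and integrating gives $\Vert\mathbf{w}(t)\Vert_{l^{\infty}}\leq\Vert\mathbf{w}(0)\Vert_{l^{\infty}}+2|\mu|(\mathbb{R})L_{M}M\,T_{h}$, which is finite. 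Then $\mathbf{v}$ and $\mathbf{w}$ would both remain bounded, contradicting the continuation alternative; hence $T_{h}<\infty$ forces $\limsup_{t\to T_{h}^{-}}\Vert\mathbf{v}(t)\Vert_{l^{\infty}}=\infty$, which is precisely \eqref{eq:blow2}.
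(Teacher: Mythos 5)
Your proposal is correct and follows essentially the same route as the paper: Picard's theorem in the Banach space $l^{\infty}$ after establishing local Lipschitz continuity of $\mathbf{v}\mapsto (D^{+}D^{-}\bm{\beta}_{h})\ast f(\mathbf{v})$ via the pointwise Lipschitz bound on $f$ combined with discrete Young's inequality and Lemma \ref{lem:lem3.2}, followed by the same integration argument to rule out blow-up of $\mathbf{v}^{\prime}$ while $\mathbf{v}$ stays bounded. The only cosmetic difference is that you write out the first-order system reformulation explicitly, which the paper leaves implicit.
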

\begin{proof}
We will consider (\ref{eq:disc}) as an $\l^{\infty}$-valued ordinary differential equation and apply Picard's Theorem on Banach spaces. To that end we first note that if $\Vert \mathbf{v}\Vert_{l^{\infty}}\leq M$ and $\Vert \mathbf{w}\Vert_{l^{\infty}}\leq M$, then
     \begin{equation}
      \Vert f(\mathbf{v})-f(\mathbf{w})\Vert_{l^{\infty}}\leq L_{M}\Vert \mathbf{v}-\mathbf{w}\Vert _{l^{\infty}},
    \end{equation}
    where   $L_{M}$ is the Lipschitz constant of $f$ on $[-M, M]$. By Young's inequality and Lemma \ref{lem:lem3.2} we have
    \begin{align}
        \Vert D^{+}D^{-}\big( \bm{\beta }_{h}\ast f(\mathbf{v})\big)-D^{+}D^{-}\big(\bm{\beta }_{h}\ast f(\mathbf{w})\big)\Vert_{l^{\infty}}
        & \leq \Vert D^{+}D^{-}\bm{\beta }_{h}\Vert_{l_{h}^{1}}\Vert f(\mathbf{v})-f(\mathbf{w})\Vert _{l^{\infty}} \nonumber \\
        & \leq 2\left \vert \mu \right \vert (\mathbb{R})L_{M}\Vert \mathbf{v}-\mathbf{w}\Vert _{l^{\infty}}.
    \end{align}%
    Hence the map
    \begin{displaymath}
        \mathbf{v}\longrightarrow D^{+}D^{-}\big(\bm{\beta}_{h}\ast f(\mathbf{v})\big)
    \end{displaymath}%
    is locally Lipschitz on $l^{\infty}$. By Picard's Theorem on Banach spaces, this implies the local well-posedness of the initial-value problem for (\ref{eq:disc}).     Standard theory of ordinary differential equations gives the blow-up condition as
    \begin{equation}
        \limsup_{t\rightarrow T_{h}^{-}}\left( \Vert \mathbf{v}(t)\Vert_{l^{\infty} }+ \Vert \mathbf{v}^{\prime}(t)\Vert_{l^{\infty} }\right)=\infty .  \label{eq:blow2a}
    \end{equation}
    To complete the proof we have to show that $\Vert \mathbf{v}^{\prime}(t)\Vert_{l^{\infty}}$ will not blow-up unless $\Vert \mathbf{v}(t)\Vert_{l^{\infty} }$ does so.    For that, suppose $\limsup_{t\rightarrow T_{h}^{-}}\Vert \mathbf{v}(t)\Vert _{l^{\infty} }=M<\infty $. Then, integrating (\ref{eq:disc}) we have
    \begin{displaymath}
         \mathbf{v}^{\prime }(t) =\mathbf{v}^{\prime}(0)+\int_{0}^{t}D^{+}D^{-}\Big(\bm{\beta}_{h}\ast f(\mathbf{v}(s))\Big) ds
    \end{displaymath}%
    so that by Lemma \ref{lem:lem3.2} for $t<T_{h}$,
    \begin{align}
        \Vert \mathbf{v}^{\prime }(t)\Vert_{l^{\infty}}
        &\leq \Vert\mathbf{v}^{\prime}(0)\Vert _{l^{\infty}}
            +2\left \vert \mu \right \vert (\mathbb{R})L_{M}\int_{0}^{t}\Vert \mathbf{v}(s)\Vert _{l^{\infty}}ds   \nonumber \\
        &\leq \Vert\mathbf{v}^{\prime}(0)\Vert _{l^{\infty}} +2\left \vert \mu \right \vert (\mathbb{R})L_{M} MT_{h} < \infty.
    \end{align}%
\end{proof}
\begin{remark}
    Theorem \ref{theo:theo3.3} is on the local well-posedness of the Cauchy problem for (\ref{eq:disc}) with a fixed value of $h$.  In the next section, when we consider the family of discretized problems  corresponding to the continuous problem (\ref{eq:cont1})-(\ref{eq:initial}), Theorem \ref{theo:theo4.1} will provide a uniform bound on $T_{h}$.
\end{remark}

\setcounter{equation}{0}
\section{ Discretization Error}\label{sec:sec4}

In this section our aim is to prove that  the discrete solution will approximate the continuous one when the discrete initial data is taken as the discretization of the continuous data. To be precise, we start with the solution $u\in C^{2}\left([0,T],H^{s}(\mathbb{R})\right)$ of (\ref{eq:cont1})-(\ref{eq:initial}) with sufficiently large $s $. We denote the discretizations of the continuous initial data $\varphi, \psi$ by  $\bm{\varphi}_{h}=\mathbf{R}\varphi$, $\bm{\psi}_{h}=\mathbf{R}\psi $. Let $\mathbf{u}_{h}\in C^{2}\left([0,T_{h}),l^{\infty}\right)$ be the solution of  (\ref{eq:disc}) with the initial data $\bm{\varphi}_{h},\bm{\psi}_{h}$. Our aim is to prove the following theorem:
\begin{theorem}\label{theo:theo4.1}
    Let $s>\frac{9}{2}$, $ f\in C^{\lfloor s\rfloor +1}(\mathbb{R})$ with $f(0)=0$, $~\varphi,~\psi \in H^{s}(\mathbb{R})$, and let $u\in C^{2}\left([0,T], H^{s}(\mathbb{R})\right)$ be the solution of the initial-value problem (\ref{eq:cont1})-(\ref{eq:initial}). Let
       $\mathbf{u}_{h}\in C^{2}\left([0,T_{h}),l^{\infty}\right)$ be the solution of  (\ref{eq:disc}) with initial data $\bm{\varphi}_{h},\bm{\psi}_{h}$.   Let $\mathbf{u}(t)=\mathbf{R}u(t) =(u(x_{i},t))$. Then there is some $h_{0}$ so that for $h\leq h_{0}$, the maximal existence time $T_{h}$ of $\mathbf{u}_{h}$ is at least $T$ and
    \begin{equation}
        \Vert \mathbf{u}(t)-\mathbf{u}_{h}(t)\Vert_{l^{\infty}}+\Vert \mathbf{u}_{t}(t)-\mathbf{u}_{h}^{\prime }(t)\Vert  \label{eq:fourone} _{l^{\infty}}=\mathcal{O}(h^{2})
    \end{equation}
    for all $t\in \lbrack 0,T \rbrack$.
\end{theorem}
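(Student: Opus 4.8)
The plan is to regard the restriction $\mathbf{u}(t)=\mathbf{R}u(t)$ of the exact solution as an approximate solution of the discrete equation (\ref{eq:disc}), to bound the resulting consistency error, and then to close a stability estimate for $\mathbf{e}(t)=\mathbf{u}(t)-\mathbf{u}_h(t)$ by a Gronwall argument combined with a continuation (bootstrap) in $h$. Since the discrete data is exactly the restriction of the continuous data, $\mathbf{e}(0)=\bm{\varphi}_h-\bm{\varphi}_h=0$ and $\mathbf{e}'(0)=\bm{\psi}_h-\bm{\psi}_h=0$, so the entire error is driven only by the consistency error and by the Lipschitz response of the nonlinearity.

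First (\emph{consistency}), I would define $\bm{\tau}(t)=\mathbf{R}(u_{tt}(t))-D^{+}D^{-}(\bm{\beta}_h\ast f(\mathbf{u}(t)))$ and show $\Vert\bm{\tau}(t)\Vert_{l^{\infty}}=\mathcal{O}(h^{2})$ uniformly on $[0,T]$. Writing $F=f(u)$ and $\mathbf{F}=\mathbf{R}F=f(\mathbf{u})$, I would use $u_{tt}=(\beta\ast F)_{xx}=\beta\ast F''$ on the continuous side and the commutation $D^{+}D^{-}(\bm{\beta}_h\ast\mathbf{F})=\bm{\beta}_h\ast(D^{+}D^{-}\mathbf{F})$ on the discrete side, then split $\bm{\tau}_i$ into a quadrature error $\int\beta(x_i-y)F''(y)\,dy-\sum_j h\beta(x_{i-j})F''(x_j)$ and a difference-operator error $\bm{\beta}_h\ast(\mathbf{R}F''-D^{+}D^{-}\mathbf{F})$. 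The first is $\mathcal{O}(h^{2})$ by Corollary \ref{cor:cor2.5} applied to $y\mapsto\beta(x_i-y)F''(y)$, whose distributional second derivative is a finite measure with total variation bounded uniformly in $i$ by $\Vert F''\Vert_{L^{\infty}}|\mu|(\mathbb{R})+2\Vert\beta'\Vert_{L^{1}}\Vert F'''\Vert_{L^{\infty}}+\Vert\beta\Vert_{L^{1}}\Vert F^{(4)}\Vert_{L^{\infty}}$; the second is $\mathcal{O}(h^{2})$ by Young's inequality and Lemma \ref{lem:lem2.7}, using that $\Vert\bm{\beta}_h\Vert_{l_h^{1}}$ stays bounded for small $h$. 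This is exactly where $s>\frac{9}{2}$ is needed: $H^{s}\hookrightarrow W^{4,\infty}$ gives $u(t)\in W^{4,\infty}$, and $f\in C^{\lfloor s\rfloor+1}$ with $\lfloor s\rfloor+1\ge 5$ then gives $F=f(u)\in W^{4,\infty}$ by the chain rule, so the norms above are finite and controlled by $\sup_{[0,T]}\Vert u(t)\Vert_{H^{s}}$.

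For the \emph{stability} estimate, subtracting (\ref{eq:disc}) for $\mathbf{u}_h$ from the identity $\frac{d^2}{dt^2}\mathbf{u}=D^{+}D^{-}(\bm{\beta}_h\ast f(\mathbf{u}))+\bm{\tau}$ yields
\begin{equation}
\mathbf{e}''(t)=D^{+}D^{-}\big(\bm{\beta}_h\ast[f(\mathbf{u}(t))-f(\mathbf{u}_h(t))]\big)+\bm{\tau}(t). \nonumber
\end{equation}
As long as $\Vert\mathbf{u}_h(t)\Vert_{l^{\infty}}\le M$, the local Lipschitz bound for $f$, Young's inequality and Lemma \ref{lem:lem3.2} give $\Vert D^{+}D^{-}(\bm{\beta}_h\ast[f(\mathbf{u})-f(\mathbf{u}_h)])\Vert_{l^{\infty}}\le 2|\mu|(\mathbb{R})L_M\Vert\mathbf{e}\Vert_{l^{\infty}}$. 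Integrating twice from the vanishing initial data and setting $\mathcal{E}(t)=\Vert\mathbf{e}(t)\Vert_{l^{\infty}}+\Vert\mathbf{e}'(t)\Vert_{l^{\infty}}$, I obtain $\mathcal{E}(t)\le Ch^{2}t+C'\int_0^{t}\mathcal{E}(s)\,ds$, so Gronwall's inequality gives $\mathcal{E}(t)\le Ch^{2}e^{C'T}$ on any subinterval where $\Vert\mathbf{u}_h\Vert_{l^{\infty}}\le M$.

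The main obstacle is that this estimate is \emph{conditional}: the Lipschitz constant $L_M$, hence $C'$, depends on an a priori bound $M$ for $\Vert\mathbf{u}_h\Vert_{l^{\infty}}$ that is itself only guaranteed by the error bound. I would break this circularity by a continuation argument. Fix $M=\sup_{[0,T]}\Vert u(t)\Vert_{L^{\infty}}+1$ and let $T_h^{\ast}$ be the largest time up to $\min(T,T_h)$ on which $\Vert\mathbf{u}_h\Vert_{l^{\infty}}\le M$. On $[0,T_h^{\ast}]$ the Gronwall bound holds, so choosing $h_0$ with $Ch_0^{2}e^{C'T}<1$ forces $\Vert\mathbf{u}_h(t)\Vert_{l^{\infty}}\le\Vert\mathbf{u}(t)\Vert_{l^{\infty}}+\mathcal{E}(t)<M$ strictly; by continuity this prevents $\Vert\mathbf{u}_h\Vert_{l^{\infty}}$ from ever reaching $M$ before $\min(T,T_h)$, whence $T_h^{\ast}=\min(T,T_h)$. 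If $T_h\le T$, this would keep $\Vert\mathbf{u}_h(t)\Vert_{l^{\infty}}$ bounded on $[0,T_h)$, contradicting the blow-up criterion (\ref{eq:blow2}) of Theorem \ref{theo:theo3.3}; hence $T_h>T$, $T_h^{\ast}=T$, and $\mathcal{E}(t)=\mathcal{O}(h^{2})$ on all of $[0,T]$, which is precisely (\ref{eq:fourone}).
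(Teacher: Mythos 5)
Your proposal is correct and follows essentially the same route as the paper's own proof: the same splitting of the consistency error into a quadrature part (handled by Corollary \ref{cor:cor2.5} applied to $y\mapsto\beta(x_i-y)f(u)''(y)$) and a difference-operator part (handled by Young's inequality and Lemma \ref{lem:lem2.7}), followed by the same Gronwall estimate with Lemma \ref{lem:lem3.2} and a continuity/bootstrap argument in $h$ to remove the conditional a priori bound. The only cosmetic differences are your threshold $M=\sup_{[0,T]}\Vert u\Vert_{L^\infty}+1$ in place of the paper's $2M$, and your slightly more explicit appeal to the blow-up criterion (\ref{eq:blow2}) to conclude $T_h>T$.
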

\begin{proof}
    We first let $\displaystyle M=\max_{0\leq t\leq T}\Vert u(t)\Vert _{L^{\infty}}$. Since $\Vert \bm{\varphi }_{h}\Vert _{l^{\infty }}\leq \Vert \varphi \Vert_{L^{\infty }}\leq M$, by continuity there is some maximal time $t_{h}\leq T$ such that $\Vert \mathbf{u}_{h}(t)\Vert _{l^{\infty}}\leq 2M$ for all $t\in \lbrack 0,t_{h}]$. Moreover, by the maximality condition either $t_{h}=T$ or $\Vert \mathbf{u}_{h}(t_{h})\Vert _{l^{\infty}}=2M$. \ At the point $\ x=x_{i}$,  (\ref{eq:cont1}) becomes
    \begin{displaymath}
        u_{tt}(x_{i}, t) =\big(\beta \ast f(u)\big)_{xx}(x_{i}, t) .\text{ }
    \end{displaymath}%
     Recalling that $\mathbf{u}(t)=\mathbf{R}u(t)$, this becomes $\mathbf{u}^{\prime\prime}(t)=\mathbf{R}\big(\beta \ast f(u)\big)_{xx}(t)$.  A residual term $\mathbf{F}_{h}$ arises from the discretization of the right-hand side of (\ref{eq:cont1}):
     \begin{equation}
       {{d^{2}\mathbf{u}}\over {dt^{2}}}= D^{+}D^{-}\big(\bm{\beta }_{h}\ast f(\mathbf{u})\big)+\mathbf{F}_{h}, \label{eq:u-F}
    \end{equation}%
     where
     \begin{displaymath}
        \mathbf{F}_{h}\mathbf{=R}\big(\beta \ast f(u)\big)_{xx}-D^{+}D^{-}\big(\bm{\beta }_{h}\ast f(\mathbf{u})\big).
     \end{displaymath}
     The $i$th entry  satisfies
    \begin{align}
        (F_{h})_{i} &=\big(\beta \ast f(u)\big)_{xx}(x_{i})-D^{+}D^{-}\big(\bm{\beta}_{h}\ast f(\mathbf{u})\big)_{i} \nonumber  \\
        &=\Big(\big(\beta \ast f(u)_{xx}\big)(x_{i})-\big( \bm{\beta}_{h}\ast \mathbf{R}f(u)_{xx}\big)_{i}\Big)
         +\Big(\big(\bm{\beta}_{h}\ast \mathbf{R}f(u)_{xx}\big)_{i}-\big(\bm{\beta}_{h}\ast D^{+}D^{-}f(\mathbf{u})\big)_{i}\Big) \nonumber \\
        &=(F_{h}^{1})_{i}+(F_{h}^{2})_{i}, \nonumber
    \end{align}%
    where the variable $t$ is suppressed for brevity.     We start with the  term $(F_{h}^{1})_{i}$. Replacing $f(u)$ by $g$ for convenience, we have
    \begin{displaymath}
        (F_{h}^{1})_{i}=(\beta \ast g^{\prime\prime})(x_{i})-(\bm{\beta}_{h}\ast \mathbf{g}^{\prime\prime})_{i}=\int \beta (x_{i}-y)g^{\prime\prime}(y)dy-\sum_{j}h\beta (x_{i}-x_{j})g^{\prime\prime}(x_{j}).
    \end{displaymath}%
    We first assume that $\beta \in W^{2,1}(\mathbb{R})$. By Corollary \ref{cor:cor2.5} we have
    \begin{displaymath}
        \left \vert (F_{h}^{1})_{i}\right \vert \leq h^{2}\Vert r^{\prime \prime }\Vert _{L^{1}}
    \end{displaymath}%
    where $r(y)=\beta (x_{i}-y)g^{\prime\prime}(y)$. Then
    \begin{displaymath}
        r^{\prime \prime }(y)=\beta ^{\prime \prime }(x_{i}-y)g^{\prime\prime}(y)-2\beta ^{\prime}(x_{i}-y)g^{\prime\prime\prime }(y)+\beta (x_{i}-y)g^{(4)}(y).
    \end{displaymath}%
    By Lemma \ref{lem:yeni} we have
    \begin{displaymath}
    \Vert g\Vert_{H^{s}}=\Vert f(u)\Vert _{H^{s}}\leq C(M)\Vert u\Vert _{H^{s}}.
    \end{displaymath}
    For $s>\frac{9}{2}$ we observe that $g^{\prime\prime}$, $g^{\prime\prime\prime}$ and $g^{(4)}$ are bounded, so that $r^{\prime \prime }\in L^{1}(\mathbb{R})$. In case $\beta^{\prime \prime }=\mu $ is a finite measure, then $r^{\prime \prime }=\widetilde{\mu }$ will be a measure with
    \begin{displaymath}
        |\widetilde{\mu }|(\mathbb{R})\leq C\Big(|\mu |(\mathbb{R})+2\Vert \beta \Vert_{W^{1,1}}\Big)\Vert u\Vert _{H^{s}},
    \end{displaymath}%
    so that
    \begin{displaymath}
        |(F_{h}^{1})_{i}|\leq h^{2}|\widetilde{\mu }|(\mathbb{R}).
    \end{displaymath}%
    For the second term $(F_{h}^{2})_{i}$, again with $g=f(u)$ and $s>\frac{9}{2}$ we have
    \begin{align}
        \left \vert (F_{h}^{2})_{i}\right \vert
        &=\left \vert \left( \bm{\beta}_{h}\ast (\mathbf{R}g^{\prime \prime}-D^{+}D^{-}\mathbf{g})\right)_{i}\right\vert
            \leq \Vert \bm{\beta}_{h}\Vert_{l_{h}^{1}}\Vert \mathbf{g}^{\prime \prime }-D^{+}D^{-}\mathbf{g}\Vert _{l^{\infty }} \nonumber \\
        &\leq h^{2}\Vert g^{(4)}\Vert _{L^{\infty }}
            \leq h^{2}\Vert f(u)\Vert_{W^{4,\infty }}
            \leq Ch^{2}\Vert f(u)\Vert _{H^{s}}  \nonumber \\
         &   \leq C(M)h^{2}\Vert u\Vert _{H^{s}},  \nonumber
    \end{align}%
    where Lemmas \ref{lem:lem2.7} and \ref{lem:yeni} are used.    Combining the estimates for $|(F_{h}^{1})_{i}|$ and $|(F_{h}^{2})_{i}|$, we obtain
    \begin{displaymath}
        \Vert \mathbf{F}_{h}(t)\Vert _{l^{\infty }}\leq C  h^{2}\Vert u(t)\Vert_{H^{s}},
    \end{displaymath}
    where $C=C(\beta, M)$ depends on the bounds on $\beta$ and $M=\sup_{0\leq t \leq T}\Vert u(t)\Vert_{L^{\infty}}$.     We now let $\mathbf{e}(t)=\mathbf{u}(t)-\mathbf{u}_{h}(t)$  be the error term. Then, from (\ref{eq:disc}) and (\ref{eq:u-F}) we have
    \begin{align}
        {{d^{2}\mathbf{e}(t)}\over {dt^{2}}} &=D^{+}D^{-}\bm{\beta}_{h}\ast \big(f(\mathbf{u})-f(\mathbf{u}_{h})\big)+\mathbf{F}_{h} \nonumber \\
        \mathbf{e}(0) &=\mathbf{0},\text{\ \ \ \ \ }\mathbf{e}^{\prime }(0)=\mathbf{0}.  \nonumber
    \end{align}%
    This implies
    \begin{align}
        \mathbf{e}(t)&=
            \int_{0}^{t}(t-\tau )\Big(D^{+}D^{-}\bm{\beta}_{h}\ast \big(f(\mathbf{u})-f(\mathbf{u}_{h})\big)+\mathbf{F}_{h}\Big)d\tau\\
         \mathbf{e}^{\prime }(t)&=
            \int_{0}^{t}\Big(D^{+}D^{-}\bm{\beta}_{h}\ast \big(f(\mathbf{u})-f(\mathbf{u}_{h})\big)+\mathbf{F}_{h}\Big) d\tau .
    \end{align}%
    But $\left \Vert f(\mathbf{u})-f(\mathbf{u}_{h})\right \Vert_{l^{\infty}} \leq L_{2M}\Vert \mathbf{u}-\mathbf{u}_{h}\Vert _{l^{\infty }}$, so that for $t\leq t_{h}\leq T,$
    \begin{align}
        \Vert \mathbf{e}(t)\Vert _{l^{\infty }}+\Vert \mathbf{e}^{\prime }(t)\Vert_{l^{\infty }}
           & \leq (1+T)\Big(C Th^{2}+L_{2M}\Vert D^{+}D^{-}\bm{\beta}_{h}\Vert _{l_{h}^{1}}\int_{0}^{t}\Vert \mathbf{e}(\tau )\Vert_{l^{\infty }}d\tau\Big), \nonumber  \\
           & \leq C(\beta, M) (1+T) \Big(\sup_{0\leq t \leq T}\Vert u(t)\Vert_{H^{s}} Th^{2}
           +\int_{0}^{t}\Vert \mathbf{e}(\tau )\Vert_{l^{\infty }}d\tau \Big), \nonumber \\
           & \leq C(\beta, M) (1+T) \Big(\sup_{0\leq t \leq T}\Vert u(t)\Vert_{H^{s}} Th^{2}
            +\int_{0}^{t}\big(\Vert \mathbf{e}(\tau )\Vert_{l^{\infty }}+\Vert \mathbf{e}^{\prime}(\tau )\Vert_{l^{\infty }}\big)d\tau \Big),
    \end{align}%
    where the second inequality follows from the bound for $D^{+}D^{-}\bm{\beta}_{h}$ in Lemma \ref{lem:lem3.2}. Then, by Gronwall's inequality,
    \begin{displaymath}
        \Vert \mathbf{e}(t)\Vert _{l^{\infty }}+\Vert \mathbf{e}^{\prime }(t)\Vert _{l^{\infty }}
            \leq Ch^{2} (1+T)T \sup_{0\leq t \leq T}\Vert u(t)\Vert_{H^{s}}e^{C(1+T)t}
    \end{displaymath}
    with $C=C(\beta, M)$.   This, in particular, implies that  $\Vert \mathbf{e}(t_{h})\Vert _{l^{\infty }}<M$ for sufficiently small $h$. Then we have $\Vert \mathbf{u}_{h}(t_{h})\Vert _{l^{\infty }}<2M$ showing that $t_{h}=T_{h}=T$. From the above estimate we get (\ref{eq:fourone}).
\end{proof}
\begin{remark}
    The proof above gives the estimate
    \begin{displaymath}
        \left\Vert \mathbf{u(}t\mathbf{)-u}_{h}\mathbf{(}t\mathbf{)}\right\Vert_{l_{\infty }}+\left\Vert \mathbf{u}_{t}\mathbf{(}t\mathbf{)-u}_{h}^{\prime }\mathbf{(}t\mathbf{)}\right\Vert _{l_{\infty }}
        \leq Ch^{2} (1+T)T \sup_{0\leq t \leq T}\Vert u(t)\Vert_{H^{s}}e^{C(1+T)t},
    \end{displaymath}
    where $C=C(\beta, M)$. Obviously, $C$ depends on the estimates of the kernel $\beta$, the solution $u$, and the nonlinear term $f(u)$. Noting that  the nonlinear bound depends on  $u$, $T$ that are determined   from the initial data (\ref{eq:initial}) by Theorem \ref{theo:theo3.1}, we can say that
    \begin{displaymath}
        \left\Vert \mathbf{u(}t\mathbf{)-u}_{h}\mathbf{(}t\mathbf{)}\right\Vert_{l_{\infty }}+\left\Vert \mathbf{u}_{t}\mathbf{(}t\mathbf{)-u}_{h}^{\prime }\mathbf{(}t\mathbf{)}\right\Vert _{l_{\infty }}\leq C(\beta, u,f, T)h^{2}.
    \end{displaymath}
\end{remark}
\begin{remark}\label{rem:rem4.2}
    The usual approach in obtaining error estimates for the discretized problem is via the use of a suitable energy identity or energy inequality.  Nevertheless,  defining a reasonable energy may be difficult for nonlocal problems. In our case, for the continuous problem  (\ref{eq:cont1})-(\ref{eq:initial}), one can define a conserved energy by inverting the convolution operator $\beta \ast ( . )$. Yet, this requires further assumptions on $\beta$ and is not easy to carry over the same approach to the discrete case; because it necessitates the inversion of the matrix $B=\left(b_{ij}\right)$ with $b_{ij}=\beta(x_{i}-x_{j})$,  $i, j\in \mathbb{Z}$. This is why we follow the above direct approach.
\end{remark}

\setcounter{equation}{0}
\section{The Truncated Problem}\label{sec:sec5}

In this section we investigate the truncated finite dimensional system
\begin{equation}
    \frac{d^{2}v_{i}^{N}}{dt^{2}}
     =\sum_{j=-N}^{N}hD^{+}D^{-}\beta (x_{i}-x_{j})f(v_{j}^{N}),\text{\ \ \ \ \ \ }-N\leq i\leq N,  \label{eq:trunca}
\end{equation}%
which is obtained by considering the first $2N+1$ rows and columns of  (\ref{eq:disc}). We express (\ref{eq:trunca}) as
\begin{displaymath}
    \frac{d^{2}\mathbf{v}^{N}}{dt^{2}}=B^{N}f(\mathbf{v}^{N}),
\end{displaymath}
where $B^{N}$ is the $(2N+1)\times (2N+1)$ matrix with the entries $b^{N}_{ij}=hD^{+}D^{-}\beta (x_{i}-x_{j})$. We note that due to our formulation no boundary terms appear. By Lemma \ref{lem:lem3.2} we have
\begin{displaymath}
    \Vert B^{N}\mathbf{w}\Vert_{l^{\infty}}\leq  2|\mu| (\mathbb{R})\Vert \mathbf{w}\Vert_{l^{\infty}},
\end{displaymath}
where we use the norm $\displaystyle \Vert \mathbf{w}\Vert_{l^{\infty}}=\max_{-N \leq i \leq N} \left \vert w_{i} \right \vert$ for vectors in $\mathbb{R}^{2N+1}$. It follows from the smoothness of $f$ that the initial-value problem defined for  the above system of ordinary differential equations has a solution on $[0, T^{N})$. Moreover the argument in the proof of Theorem \ref{theo:theo3.3} shows that the blow-up condition is
\begin{equation}
        \limsup_{t\rightarrow (T^{N})^{-}} \Vert \mathbf{v}^{N}(t)\Vert_{l^{\infty} }=\infty .  \label{eq:blow3}
    \end{equation}
In this section we will show that, for sufficiently large $N$, the solution $\mathbf{v}^{N}=(v_{i}^{N})$ of (\ref{eq:trunca}) approximates  the solution $\mathbf{v}$ of the semi-discrete problem defined for (\ref{eq:disc}) and hence it approximates the solution $u$ of the continuous problem (\ref{eq:cont1})-(\ref{eq:initial}). We define a truncation operator ${\cal T}^{N}: l^{\infty} \rightarrow \mathbb{R}^{2N+1}$ as the projection  ${\cal T}^{N}\mathbf{v}=(v_{-N},v_{-N+1}, \ldots ,v_{0}, \ldots, v_{N-1},v_{N})$.
\begin{theorem}\label{theo:theo5.1}
    Let $\mathbf{v}\in C^{2}\left([0,T],l^{\infty}\right)$  be the solution of (\ref{eq:disc}) with initial values $\mathbf{v}(0)$, $\mathbf{v}^{\prime}(0)$ and let
    \begin{displaymath}
        \delta=\sup \big\{ \left\vert v_{i}(t)\right\vert : t\in \left[ 0,T\right] , \left\vert i\right\vert >N\big\} ~~\mbox{and}~~
         \epsilon (\delta )=\max_{\left\vert z\right\vert \leq \delta } \left\vert f(z)\right\vert .
    \end{displaymath}
    Then for sufficiently small $\epsilon(\delta)$, the solution $\mathbf{v}^{N}$ \ of (\ref{eq:trunca}) with initial  values $\mathbf{v}^{N}(0)={\cal T}^{N}\mathbf{v}(0)$, $(\mathbf{v}^{N})^{\prime}(0)={\cal T}^{N}\mathbf{v}^{\prime}(0)$ exists for  times $t\in \lbrack 0,T]$ and
    \begin{displaymath}
        \left\vert v_{i}^{N}(t)-v_{i}(t)\right\vert+\left\vert (v_{i}^{N})^{\prime}(t)-(v_{i})^{\prime}(t)\right\vert
            \leq C\epsilon (\delta ),\text{ \ }t\in \lbrack 0,T\rbrack,
    \end{displaymath}%
    for all $-N\leq i \leq N$.
\end{theorem}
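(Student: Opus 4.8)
The plan is to compare the truncated solution $\mathbf{v}^{N}$ not directly with the continuous solution but with the restriction $\mathcal{T}^{N}\mathbf{v}$ of the infinite semi-discrete solution, mimicking the bootstrap argument of Theorem \ref{theo:theo4.1}. Writing the infinite system (\ref{eq:disc}) componentwise as $(D^{+}D^{-}\bm{\beta}_{h}\ast f(\mathbf{v}))_{i}=\sum_{j}hD^{+}D^{-}\beta(x_{i}-x_{j})f(v_{j})$ and splitting the sum at $|j|=N$, I would note that for $|i|\leq N$ the vector $\mathcal{T}^{N}\mathbf{v}$ satisfies the truncated system (\ref{eq:trunca}) exactly, up to a residual coming from the discarded tail,
\[
   (\mathbf{G}(t))_{i}=\sum_{|j|>N}hD^{+}D^{-}\beta(x_{i}-x_{j})\,f(v_{j}(t)).
\]
The first step is to bound $\mathbf{G}$ uniformly. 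Since $|v_{j}(t)|\leq\delta$ for $|j|>N$ by the definition of $\delta$, we have $|f(v_{j}(t))|\leq\epsilon(\delta)$, and the tail sub-sum is dominated by the full $l_{h}^{1}$ norm, so that Lemma \ref{lem:lem3.2} gives
\[
   |(\mathbf{G}(t))_{i}|\leq\epsilon(\delta)\sum_{|j|>N}h\,|D^{+}D^{-}\beta(x_{i}-x_{j})|\leq\epsilon(\delta)\,\|D^{+}D^{-}\bm{\beta}_{h}\|_{l_{h}^{1}}\leq 2|\mu|(\mathbb{R})\,\epsilon(\delta).
\]
The crucial point is that the $l_{h}^{1}$ bound of Lemma \ref{lem:lem3.2} controls the entire tail at once, uniformly in $i$ and $N$.

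Next I would set up the continuity (bootstrap) argument. Let $M=\max_{0\leq t\leq T}\|\mathbf{v}(t)\|_{l^{\infty}}$, which is finite, and let $t_{N}\leq\min(T,T^{N})$ be the maximal time on which $\|\mathbf{v}^{N}(t)\|_{l^{\infty}}\leq 2M$; this interval is nonempty since $\|\mathbf{v}^{N}(0)\|_{l^{\infty}}=\|\mathcal{T}^{N}\mathbf{v}(0)\|_{l^{\infty}}\leq M$. Writing $\mathbf{e}=\mathbf{v}^{N}-\mathcal{T}^{N}\mathbf{v}$, subtracting the two systems yields, for $|i|\leq N$ and $t\leq t_{N}$,
\[
   \frac{d^{2}e_{i}}{dt^{2}}=\sum_{|j|\leq N}hD^{+}D^{-}\beta(x_{i}-x_{j})\bigl(f(v_{j}^{N})-f(v_{j})\bigr)-(\mathbf{G})_{i},\qquad \mathbf{e}(0)=\mathbf{e}'(0)=\mathbf{0}.
\]
On $[0,t_{N}]$ both $|v_{j}|\leq M$ and $|v_{j}^{N}|\leq 2M$, so with $L_{2M}$ the Lipschitz constant of $f$ on $[-2M,2M]$, Lemma \ref{lem:lem3.2} bounds the first term by $2|\mu|(\mathbb{R})L_{2M}\|\mathbf{e}(t)\|_{l^{\infty}}$, while the second is controlled by the previous step.

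I would then pass to the integral (Duhamel) representation $\mathbf{e}(t)=\int_{0}^{t}(t-\tau)\mathbf{e}''(\tau)\,d\tau$ and its derivative analogue, take $l^{\infty}$ norms (using $t-\tau\leq T$), and collect the bounds to obtain
\[
   \|\mathbf{e}(t)\|_{l^{\infty}}+\|\mathbf{e}'(t)\|_{l^{\infty}}\leq(1+T)\Bigl(2|\mu|(\mathbb{R})\,\epsilon(\delta)\,T+2|\mu|(\mathbb{R})L_{2M}\int_{0}^{t}\|\mathbf{e}(\tau)\|_{l^{\infty}}\,d\tau\Bigr).
\]
Gronwall's inequality then gives $\|\mathbf{e}(t)\|_{l^{\infty}}+\|\mathbf{e}'(t)\|_{l^{\infty}}\leq C\epsilon(\delta)$ on $[0,t_{N}]$ with $C=C(\mu,L_{2M},T)$. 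The main obstacle, and the step that actually closes the argument, is converting this conditional estimate into an unconditional one on all of $[0,T]$. For $\epsilon(\delta)$ small enough that $C\epsilon(\delta)<M$, one gets $\|\mathbf{v}^{N}(t_{N})\|_{l^{\infty}}\leq\|\mathcal{T}^{N}\mathbf{v}(t_{N})\|_{l^{\infty}}+\|\mathbf{e}(t_{N})\|_{l^{\infty}}<M+M=2M$, which contradicts the maximality alternative $\|\mathbf{v}^{N}(t_{N})\|_{l^{\infty}}=2M$ unless $t_{N}=T$; moreover, since $\|\mathbf{v}^{N}\|_{l^{\infty}}$ stays bounded by $2M$ up to $t_{N}$, the blow-up criterion (\ref{eq:blow3}) rules out $T^{N}\leq T$, so $\mathbf{v}^{N}$ exists on all of $[0,T]$. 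Hence $t_{N}=T$ and the claimed bound $|v_{i}^{N}(t)-v_{i}(t)|+|(v_{i}^{N})'(t)-(v_{i})'(t)|\leq C\epsilon(\delta)$ holds for all $|i|\leq N$ and $t\in[0,T]$.
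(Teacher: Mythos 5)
Your proposal is correct and follows essentially the same route as the paper's own proof: the same splitting of the infinite sum at $|j|=N$ to produce a tail residual bounded by $2|\mu|(\mathbb{R})\,\epsilon(\delta)$ via Lemma \ref{lem:lem3.2}, the same bootstrap interval $[0,t_{N}]$ with the $2M$ bound, the same Duhamel--Gronwall estimate with Lipschitz constant $L_{2M}$, and the same continuity argument to force $t_{N}=T$. The only (harmless) differences are cosmetic: your error term has the opposite sign, you run Gronwall on $\|\mathbf{e}\|_{l^{\infty}}+\|\mathbf{e}'\|_{l^{\infty}}$ jointly rather than treating $\mathbf{e}'$ afterwards, and you make explicit the appeal to the blow-up criterion (\ref{eq:blow3}) to rule out $T^{N}\leq T$, which the paper leaves implicit.
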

\begin{proof}
    We follow the approach in the proof of Theorem \ref{theo:theo4.1}. Taking the components with $-N \leq i \leq N$ of (\ref{eq:disc}) we have
    \begin{align}
    \frac{d^{2}v_{i}}{dt^{2}}
     &=\sum_{j=-\infty}^{\infty}hD^{+}D^{-}\beta (x_{i}-x_{j})f(v_{j})  \nonumber \\
     &=\sum_{j=-N}^{N}hD^{+}D^{-}\beta (x_{i}-x_{j})f(v_{j})+F_{i}^{N} \nonumber
    \end{align}%
    with the residual term
    \begin{displaymath}
    F_{i}^{N}=\sum_{|j|> N}hD^{+}D^{-}\beta (x_{i}-x_{j})f(v_{j}).
    \end{displaymath}%
    Then  ${\cal T}^{N}\mathbf{v}$ satisfies the system
    \begin{displaymath}
        \frac{d^{2}{\cal T}^{N}\mathbf{v}}{dt^{2}}=B^{N}f({\cal T}^{N}\mathbf{v})+\mathbf{F}^{N}
    \end{displaymath}
    with the residual term $\mathbf{F}^{N}=(F_{i}^{N})$. Estimating the residual term we get
     \begin{displaymath}
        \left \vert F_{i}^{N}\right \vert \leq 2|\mu|(\mathbb{R}) \sup_{|j|> N} \left \vert f(v_{j})\right \vert \leq C \epsilon(\delta).
    \end{displaymath}%
     As in the proof of Theorem \ref{theo:theo4.1} we set  $\displaystyle M=\max_{0\leq t\leq T}\Vert \mathbf{v}(t)\Vert _{l^{\infty} }$. Since $\Vert\mathbf{v}(0)\Vert _{l^{\infty }}\leq M$, by continuity of the solution $\mathbf{v}^{N}$ of the truncated problem there is some maximal time $t_{N}\leq T$ such that we have   $\Vert \mathbf{v}^{N}(t)\Vert_{l^{\infty}}\leq 2M$ for all $t\in \lbrack 0,t_{N}]$. By the maximality condition either $t_{N}=T$ or $\Vert \mathbf{v}^{N}(t_{N})\Vert_{l^{\infty}}=2M$. We define the error term $\widetilde{\mathbf{e}}={\cal T}^{N}\mathbf{v}-\mathbf{v}^{N}$. Then
    \begin{align}
        &   \frac{d^{2} \widetilde{\mathbf{e}}(t)}{dt^{2}}
            =B^{N}\big(f({\cal T}^{N}\mathbf{v})-f(\mathbf{v}^{N})\big)+\mathbf{F}^{N}, \nonumber \\
        & \widetilde{\mathbf{e}}(0) =\mathbf{0},\text{\ \ \ \ }\widetilde{\mathbf{e}}^{\prime }(0)=\mathbf{0}, \nonumber
    \end{align}%
    so
    \begin{displaymath}
        \widetilde{\mathbf{e}}(t)=\int_{0}^{t}(t-\tau )\Big(B^{N}\big(f({\cal T}^{N}\mathbf{v})-f(\mathbf{v}^{N})\big)+\mathbf{F}^{N}\Big)d\tau .
    \end{displaymath}%
    Then
    \begin{displaymath}
        \Vert \widetilde{\mathbf{e}}(t)\Vert_{l^{\infty}}\leq 2|\mu| (\mathbb{R})\int_{0}^{t}(t-\tau )\left\Vert \big(f({\cal T}^{N}\mathbf{v})-f(\mathbf{v}^{N})\big)(\tau) \right\Vert_{l^{\infty}}d\tau + C t^{2}\epsilon(\delta) .
    \end{displaymath}%
    But
    \begin{displaymath}
        \left\Vert f({\cal T}^{N}\mathbf{v})-f(\mathbf{v}^{N})\right\Vert_{l^{\infty}}
            \leq L_{2M} \left\Vert {\cal T}^{N}\mathbf{v}-\mathbf{v}^{N}\right\Vert_{l^{\infty}},
    \end{displaymath}%
    where $L_{2M}$ is the Lipschitz constant for $f$ on $\left[ -2M,2M\right] $. Putting together, we have
    \begin{displaymath}
        \Vert \widetilde{\mathbf{e}}(t)\Vert _{l^{\infty }}\leq CT^{2}\epsilon
            +CT\int_{0}^{t}\Vert \widetilde{\mathbf{e}}(\tau )\Vert _{l^{\infty }}d\tau,
    \end{displaymath}%
    and by Gronwall's inequality,
    \begin{displaymath}
        \Vert \widetilde{\mathbf{e}}(t)\Vert _{l^{\infty }}\leq CT^{2}\epsilon e^{CT^{2}}.
    \end{displaymath}%
    This, in particular, implies that there is some $\epsilon_{0}$ such that for all $\epsilon(\delta)\leq \epsilon_{0} $ we have $\Vert \widetilde{\mathbf{e}}(t_{N})\Vert _{l^{\infty }}<M$. Then we have     $\Vert \mathbf{v}^{N} (t_{N}) \Vert_{l^{\infty }} < 2M$ showing that $t_{N}=T$. The required estimate for $\widetilde{\mathbf{e}}^{\prime }(t)$ follows from the identity
    \begin{displaymath}
         \widetilde{\mathbf{e}}^{\prime }(t)
            =\int_{0}^{t}\Big(B^{N}\big(f({\cal T}^{N}\mathbf{v})-f(\mathbf{v}^{N})\big)+\mathbf{F}^{N}\Big) d\tau
    \end{displaymath}
    and this completes the proof.
\end{proof}
\begin{theorem}\label{theo:theo5.2}
    Let $s>\frac{9}{2}$, $f\in C^{\lfloor s\rfloor +1}(\mathbb{R})$ with $f(0)=0$, $\varphi, \psi \in H^{s}(\mathbb{R})$, and let $u\in C^{2}\left([0,T],H^{s}(\mathbb{R})\right)$ \ be the solution  of the initial-value problem (\ref{eq:cont1})-(\ref{eq:initial}).  Then for sufficiently small $h$ and  $\epsilon >0$, there is an $N$ so that  the solution $\mathbf{u}_{h}^{N}$ \ of (\ref{eq:trunca}) with initial  values $\mathbf{u}_{h}^{N}(0)={\cal T}^{N}\bm{\varphi}_{h}$, $(\mathbf{u}_{h}^{N})^{\prime}(0)={\cal T}^{N}\bm{\psi}_{h}$ exists for  times $t\in \lbrack 0,T]$ and
    \begin{equation}
        \Big\vert u(ih,t) -\left(\mathbf{u}_{h}^{N}\right)_{i}(t)\Big\vert
            +\left \vert u_{t}(ih,t)-\left(\mathbf{u}_{h}^{N}\right)^{\prime}_{i}(t)\right \vert = {\cal O}\left(h^{2}+\epsilon\right),
            \text{ \ }  t\in \lbrack 0,T] \label{eq:fivetwo}
    \end{equation}%
    for all $-N\leq i\leq N$.
\end{theorem}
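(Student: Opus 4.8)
The statement is a composition of the two approximation results already established, so the plan is to combine Theorem~\ref{theo:theo4.1} and Theorem~\ref{theo:theo5.1} through a single triangle inequality. Write $\mathbf{u}=\mathbf{R}u$ for the restriction of the continuous solution, $\mathbf{u}_{h}$ for the (infinite) semi-discrete solution of (\ref{eq:disc}) with data $\bm{\varphi}_{h},\bm{\psi}_{h}$, and $\mathbf{u}_{h}^{N}$ for the truncated solution of (\ref{eq:trunca}). The key observation is that $\mathbf{u}_{h}^{N}$ is precisely the truncation of $\mathbf{u}_{h}$ in the sense of Theorem~\ref{theo:theo5.1}, since its initial data ${\cal T}^{N}\bm{\varphi}_{h},{\cal T}^{N}\bm{\psi}_{h}$ are the truncations of the data of $\mathbf{u}_{h}$. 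Hence for $-N\le i\le N$,
\begin{displaymath}
   \big|u(ih,t)-(\mathbf{u}_{h}^{N})_{i}(t)\big| \le \big|(\mathbf{u})_{i}(t)-(\mathbf{u}_{h})_{i}(t)\big| + \big|(\mathbf{u}_{h})_{i}(t)-(\mathbf{u}_{h}^{N})_{i}(t)\big|,
\end{displaymath}
and similarly for the derivatives. First I would invoke Theorem~\ref{theo:theo4.1}: for $h\le h_{0}$ the solution $\mathbf{u}_{h}$ exists on $[0,T]$ and the first term on the right is ${\cal O}(h^{2})$, uniformly in $t\in[0,T]$. Then I would apply Theorem~\ref{theo:theo5.1} with $\mathbf{v}=\mathbf{u}_{h}$, which bounds the second term by $C\epsilon(\delta)$, where $\delta=\sup\{|(\mathbf{u}_{h})_{i}(t)|:t\in[0,T],\,|i|>N\}$.

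The genuinely new work is to show that, by choosing $N$ large, $\delta$ — and therefore $\epsilon(\delta)=\max_{|z|\le\delta}|f(z)|$ — can be made as small as we please. Here I would again use Theorem~\ref{theo:theo4.1} to write, for $|i|>N$,
\begin{displaymath}
   \big|(\mathbf{u}_{h})_{i}(t)\big| \le |u(ih,t)| + Ch^{2} \le \sup_{|x|>Nh,\,t\in[0,T]}|u(x,t)| + Ch^{2}.
\end{displaymath}
To control the supremum I would use that $u\in C([0,T],H^{s})$ with $s>\tfrac12$, so that $\{u(t):t\in[0,T]\}$ is a compact subset of $H^{s}$ and hence, via the embedding $H^{s}\hookrightarrow C_{0}(\mathbb{R})$ into continuous functions vanishing at infinity, a compact subset of $C_{0}$; a compact family in $C_{0}$ has uniform tail decay, so for any prescribed $\eta>0$ there is $R$ with $\sup_{|x|>R,\,t}|u(x,t)|\le\eta$. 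Thus, given $\epsilon>0$, I first pick a threshold $\delta^{\ast}$ so small that $f(0)=0$ and continuity of $f$ force $\max_{|z|\le\delta^{\ast}}|f(z)|\le\epsilon$, then take $h$ small enough that $Ch^{2}\le\delta^{\ast}/2$ and $N$ large enough that $Nh\ge R(\delta^{\ast}/2)$. This yields $\delta\le\delta^{\ast}$, hence $\epsilon(\delta)\le\epsilon$, and the truncation error is then bounded by $C\epsilon$.

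Combining the two estimates gives the claimed ${\cal O}(h^{2}+\epsilon)$ bound on both $|u(ih,t)-(\mathbf{u}_{h}^{N})_{i}(t)|$ and the corresponding derivative term. I expect the main obstacle to be the uniform-in-$t$ spatial decay of $u$ and the bookkeeping it forces: because the tail must be pushed beyond $|x|=R$, the index $N$ must be taken of order $R/h$, so $N$ necessarily grows as $h\to0$. One must also verify that $\epsilon(\delta)$ is small enough to meet the smallness hypothesis of Theorem~\ref{theo:theo5.1} (which guarantees $t_{N}=T$), but this is automatic once $\epsilon$ is chosen below the threshold $\epsilon_{0}$ appearing there. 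The remainder is the same Gronwall-type bookkeeping already carried out in Theorems~\ref{theo:theo4.1} and \ref{theo:theo5.1}, now applied along the two legs of the triangle inequality.
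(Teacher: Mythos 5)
Your proposal is correct, and its skeleton is exactly the paper's: decompose the error through the semi-discrete solution $\mathbf{u}_{h}$, use Theorem~\ref{theo:theo4.1} for the leg $\mathbf{u}-\mathbf{u}_{h}$ and Theorem~\ref{theo:theo5.1} (with $\mathbf{v}=\mathbf{u}_{h}$) for the leg ${\cal T}^{N}\mathbf{u}_{h}-\mathbf{u}_{h}^{N}$, and reduce everything to making $\delta=\sup\{|(\mathbf{u}_{h})_{i}(t)|:t\in[0,T],\ |i|>N\}$ small, checking at the end that $\epsilon(\delta)$ falls below the $N$-independent threshold $\epsilon_{0}$ of Theorem~\ref{theo:theo5.1}. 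The one point where you genuinely diverge is the justification of uniform-in-$t$ spatial decay of $u$. The paper proves this as Proposition~\ref{prop:prop5.3}: pointwise decay plus continuity of $w$ on $\mathbb{R}\times[0,T]$ is claimed to give decay uniform on $[0,T]$, via a subsequence/contradiction argument. You instead note that $t\mapsto u(t)$ is continuous from the compact interval $[0,T]$ into $H^{s}\hookrightarrow C_{0}(\mathbb{R})$, so the orbit $\{u(t):t\in[0,T]\}$ is compact in $C_{0}(\mathbb{R})$, and a compact family in $C_{0}$ has uniformly small tails (cover it by finitely many $\eta/2$-balls and take the largest of the finitely many cutoff radii). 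Your route is the more robust of the two: mere joint continuity, which is all Proposition~\ref{prop:prop5.3} hypothesizes, is in fact not enough---a traveling bump such as $w(x,t)=\max\bigl(0,1-|x-1/t|\bigr)$ for $t\in(0,1]$, $w(\cdot,0)\equiv 0$, is continuous on $\mathbb{R}\times[0,1]$ and decays in $x$ for each fixed $t$, yet $\sup_{x}w(x,t)=1$ for every $t>0$. The gap in the paper's subsequence argument (passing from $|w(x_{n_k},t_{n_k})|\geq\nu$ to $|w(x_{n_k},t_{0})|\geq\nu/2$ needs equicontinuity in $t$ uniformly in $x$) is harmless in the application precisely because $u\in C([0,T],H^{s})$ gives sup-norm continuity of the orbit, and that is the very fact your compactness argument uses directly. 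Beyond this, your bookkeeping matches the paper's: you choose $\delta^{*}$ with $\max_{|z|\leq\delta^{*}}|f(z)|\leq\epsilon$, absorb the $Ch^{2}$ perturbation of the tail of $\mathbf{u}_{h}$ into $\delta^{*}$, and take $N$ of order $R/h$, just as the paper takes $N=\frac{1}{h}\lfloor S\rfloor$.
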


The proof follows from putting together the results of Theorems \ref{theo:theo4.1} and \ref{theo:theo5.1}. Let $u\in C^{2}\left([0,T],H^{s}(\mathbb{R)}\right)$ be the solution of (\ref{eq:cont1})-(\ref{eq:initial}). Then, by Theorem \ref{theo:theo4.1}, the solution $\mathbf{u}_{h}\in C^{2}\left([0,T],l^{\infty }\right)$ of the semi-discrete problem (\ref{eq:disc}) with initial data $\mathbf{u}_{h}(0)=\bm{\varphi}_{h}$, $\left( \mathbf{u}_{h}\right)^{\prime }(0)=\bm{\psi}_{h}$ satisfies
\begin{displaymath}
    \left\Vert \mathbf{u}(t)-\mathbf{u}_{h}(t)\right\Vert_{l^{\infty}}
        +\left\Vert \mathbf{u}_{t}(t)-(\mathbf{u}_{h})^{\prime }(t)\right\Vert_{l^{\infty }}={\cal O}\left(h^{2}\right)
\end{displaymath}
for all $t\in \left[0, T\right] $. This means
\begin{equation}
    \left \vert u(ih,t)-(\mathbf{u}_{h})_{i}(t)\right \vert
        +\left \vert u_{t}(ih,t)-(\mathbf{u}_{h})_{i}^{\prime}(t)\right \vert\leq C h^{2},  \text{ \ }i\in \mathbb{Z}   \label{eq:estimate}
\end{equation}%
 for all $t\in \left[ 0,T\right] $ and for sufficiently small $h$.

The next step is to approximate $\mathbf{u}_{h}$ by the solution $\mathbf{u}_{h}^{N}$ of the truncated problem  defined by (\ref{eq:trunca}) and the initial values
\begin{displaymath}
    \mathbf{u}_{h}^{N}(0) =\mathcal{T}^{N}\ \mathbf{u}_{h}(0) =\mathcal{T}^{N}\bm{\varphi}_{h},\text{ \ \ \ }
    \left( \mathbf{u}_{h}^{N}\right)^{\prime }(0) =\mathcal{T}^{N}(\mathbf{u}_{h})^{\prime }(0) =\mathcal{T}^{N}\bm{\psi}_{h}.
\end{displaymath}
To apply Theorem \ref{theo:theo5.1} for a given $\epsilon >0$, we should choose the appropriate value of $N$. This value will be determined from the condition $\displaystyle \epsilon =\epsilon (\delta )=\max_{\left\vert z\right\vert \leq \delta }\left\vert f(z)\right\vert $ where
\begin{displaymath}
    \delta =\sup \big\{ \left\vert \left(\mathbf{u}_{h}\big)_{i}(t)\right\vert :
        t\in \left[0, T\right] ,\text{ }\left\vert i\right\vert >N\right\} .
\end{displaymath}
To that end we will use the following observation.
\begin{proposition} \label{prop:prop5.3}
    Suppose $w$ is continuous on $\mathbb{R}\times \left[ 0,T\right] $. If $~\lim_{\left\vert x\right\vert \rightarrow \infty }w(x,t)=0$ for all  $t\in \left[ 0,T\right]$, then $~\lim_{\left\vert x\right\vert \rightarrow \infty }w(x,t)=0$ uniformly on $\left[ 0,T\right] .$
\end{proposition}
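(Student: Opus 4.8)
The plan is to use the compactness of $[0,T]$ to promote the slice-wise decay $w(\cdot,t)\to 0$ into decay that is uniform in $t$. Writing the claim quantitatively, I must produce, for each $\varepsilon>0$, a radius $R>0$ with $|w(x,t)|<\varepsilon$ for all $|x|>R$ and all $t\in[0,T]$. The first idea is a covering argument: pick finitely many times $t_{1},\dots,t_{m}$ in $[0,T]$, use the slice-wise hypothesis to obtain $R_{k}$ with $|w(x,t_{k})|<\varepsilon/2$ for $|x|>R_{k}$, set $R=\max_{k}R_{k}$, and then control $|w(x,t)|$ for an arbitrary $t$ by comparing it with the nearest sampled slice $w(\cdot,t_{k})$.

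The crux, and the step I expect to be the main obstacle, is precisely this last comparison. For it to succeed one needs $\sup_{x}|w(x,t)-w(x,t_{k})|$ to be small whenever $t$ is close to $t_{k}$, i.e. the time-slices must vary continuously in the \emph{uniform} norm; plain joint continuity on $\mathbb{R}\times[0,T]$ together with slice-wise decay is not by itself enough to guarantee this (a bump of fixed height that escapes to infinity as $t$ varies is consistent with both hypotheses, and a naive contradiction argument along sequences $x_{n}\to\infty$, $t_{n}\to t^{\ast}$ stalls exactly because $x_{n}$ drifts and continuity at the single slice $t^{\ast}$ gives no control). In the setting where the proposition is applied this uniform-norm continuity is available for free: the relevant $w$ is the continuous representative of the solution $u$, and by Theorem~\ref{theo:theo3.1} the map $t\mapsto u(\cdot,t)$ is continuous from $[0,T]$ into $H^{s}(\mathbb{R})$ with $s>\tfrac12$. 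Since the Sobolev embedding $H^{s}(\mathbb{R})\hookrightarrow C_{0}(\mathbb{R})$ into the continuous functions vanishing at infinity is bounded in the $\|\cdot\|_{L^{\infty}}$ norm, the slices $w(\cdot,t)$ depend continuously on $t$ in the sup norm; equivalently, $w$ extends continuously to the compactification $[-\infty,+\infty]\times[0,T]$ with $w(\pm\infty,t)=0$.

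Granting this, I would finish with the covering argument above: using uniform-norm continuity of $t\mapsto w(\cdot,t)$ and compactness of $[0,T]$, select $t_{1},\dots,t_{m}$ so that every $t$ lies within sup-norm distance $\varepsilon/2$ of some $w(\cdot,t_{k})$; then for $|x|>R=\max_{k}R_{k}$ and any $t$,
\begin{displaymath}
    |w(x,t)|\le |w(x,t)-w(x,t_{k})|+|w(x,t_{k})|<\tfrac{\varepsilon}{2}+\tfrac{\varepsilon}{2}=\varepsilon,
\end{displaymath}
which is the desired uniform limit. Equivalently and more slickly, on the compact space $[-\infty,+\infty]\times[0,T]$ the closed set $\{(x,t):|w(x,t)|\ge\varepsilon\}$ is disjoint from $\{\pm\infty\}\times[0,T]$, where $w$ vanishes, hence it is bounded in $x$ and yields the same $R$. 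The only nonroutine ingredient is recognizing that uniform decay requires the sup-norm continuity of the time-slices, and that this is exactly what the $H^{s}$-valued continuity of $u$ supplies.
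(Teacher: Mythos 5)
Your argument is correct, but it is genuinely different from the paper's, and the difference is substantive rather than stylistic. The paper proves the proposition by contradiction: it takes $(x_{n},t_{n})$ with $\left\vert x_{n}\right\vert \rightarrow \infty$ and $\left\vert w(x_{n},t_{n})\right\vert \geq \nu$, extracts a subsequence $t_{n_{k}}\rightarrow t_{0}$, and then asserts $\left\vert w(x_{n_{k}},t_{0})\right\vert \geq \frac{\nu}{2}$ for large $n_{k}$. That assertion is exactly the comparison you flagged as the crux: it requires $\sup_{x}\left\vert w(x,t_{n_{k}})-w(x,t_{0})\right\vert \rightarrow 0$, i.e. continuity of the slices in the sup norm, and joint continuity on the non-compact strip $\mathbb{R}\times [0,T]$ does not supply this, because the points $x_{n_{k}}$ leave every compact set. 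Your ``escaping bump'' remark can be upgraded to an actual counterexample to the proposition as stated: let $\phi$ be continuous, supported in $[-1,1]$, with $\phi(0)=1$, and set $w(x,t)=\phi\left(x-\frac{1}{t}\right)$ for $0<t\leq T$ and $w(x,0)=0$. This $w$ is jointly continuous (near any point $(x_{0},0)$ it vanishes identically, since $w(x,t)\neq 0$ forces $x\geq \frac{1}{t}-1$), every slice vanishes at infinity, yet $w\left(\frac{1}{t},t\right)=1$, so no radius works uniformly. Hence the paper's own proof has a genuine gap at that step, and under its stated hypotheses the conclusion is in fact false.

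Your repair is the right one, and it is sound precisely where the proposition is used: in the proof of Theorem \ref{theo:theo5.2} one applies it to $w=u$ with $u\in C^{2}\left([0,T],H^{s}(\mathbb{R})\right)$, $s>\frac{9}{2}$, and the bounded embedding of $H^{s}(\mathbb{R})$ into the space of continuous functions vanishing at infinity makes $t\mapsto u(\cdot,t)$ continuous in the $L^{\infty}$ norm; compactness of $[0,T]$ plus your finite covering by sup-norm balls then gives uniform decay exactly as you wrote. (Once sup-norm continuity of the slices is assumed, the paper's subsequence argument also becomes valid, since then $\left\vert w(x_{n_{k}},t_{n_{k}})-w(x_{n_{k}},t_{0})\right\vert \leq \left\Vert w(\cdot,t_{n_{k}})-w(\cdot,t_{0})\right\Vert_{L^{\infty}}\rightarrow 0$.) So the clean statement to record is: if $t\mapsto w(\cdot,t)$ is continuous from $[0,T]$ into the continuous functions vanishing at infinity, equipped with the sup norm, then $w(x,t)\rightarrow 0$ as $\left\vert x\right\vert \rightarrow \infty$ uniformly on $[0,T]$. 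That is what the application provides, what the paper actually needs, and what your argument proves.
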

\begin{proof}
    Assuming the contrary, there is some  $\nu >0,$ and a sequence $(x_{n},t_{n})$ such that $\left\vert x_{n}\right \vert \rightarrow \infty $ and $\left \vert w(x_{n},t_{n})\right \vert \geq \nu$. Since $t_{n}\in [0, T]$, there is a subsequence $(t_{n_{k}})$  that converges to some $t_{0}\in [0, T]$. But then  $\left \vert w(x_{n_{k}},t_{0})\right \vert \geq \frac{\nu }{2}>0 $ for sufficiently large $ n_{k}$ contradicting $\lim_{\left\vert x\right\vert \rightarrow \infty }w(x,t_{0})=0.$
\end{proof}
We are now ready to complete the proof of Theroem \ref{theo:theo5.2}. Let $\epsilon >0$ be given. Since $f(0)=0$, by continuity there is some $\delta >0$ so that $\epsilon =\epsilon (\delta )=\max_{\left\vert z\right\vert \leq \delta}\left\vert f(z)\right\vert $. Moreover we can assume that  $\epsilon(\delta )$ is sufficiently small so that the conclusion of Theorem \ref{theo:theo5.1} holds (the proof of Theorem \ref{theo:theo5.1} shows that the bound $\epsilon_{0}$ for $\epsilon(\delta )$ does not depend on $N$). Next, since $u\in C^{2}\left([0,T],H^{s}(\mathbb{R)}\right)$ with $
s>\frac{9}{2}$, $u$ is continuous on $\mathbb{R}\times \left[ 0,T\right] $ and satisfies $\lim_{\left\vert x\right\vert \rightarrow \infty }u(x,t)=0$ for all $t\in \left[0 ,T\right]$. By Proposition \ref{prop:prop5.3},  $\lim_{\left\vert x\right\vert \rightarrow \infty }u(x,t)=0$ uniformly on $\left[0, T\right]$. Hence there is some $S>0$ so that $\left\vert u(x,t)\right\vert \leq \frac{\delta}{2}$ for all $\left\vert x\right\vert \geq S$ and $t\in \left[ 0,T\right]$. Let $N=\frac{1}{h}(\lfloor S\rfloor )$. Then for $\left\vert i\right\vert >N$ we have $\left\vert u\left( ih,t\right) \right\vert \leq \frac{\delta }{2}$. By the estimate (\ref{eq:estimate}), for sufficiently small $h$
\begin{displaymath}
    \left\vert (\mathbf{u}_{h})_{i}(t)\right\vert
        \leq \left\vert u\left(ih,t\right) \right\vert +\left\vert u\left( ih,t\right) -(\mathbf{u}_{h})_{i}(t)\right\vert
        \leq \frac{\delta }{2}+Ch^{2}\leq \delta
\end{displaymath}
for all $\left\vert i\right\vert >N.$ Now Theorem \ref{theo:theo5.1} applies to yield
\begin{displaymath}
    \left \vert (\mathbf{u}_{h})_{i}(t)-(\mathbf{u}_{h}^{N})_{i}(t)\right \vert
    +\left \vert (\mathbf{u}_{h})_{i}^{\prime }(t)-(\mathbf{u}_{h}^{N})_{i}^{\prime }(t)\right\vert \leq C\epsilon .
\end{displaymath}
Combining this result with (\ref{eq:estimate}) gives (\ref{eq:fivetwo}). This completes the proof of Theorem \ref{theo:theo5.2}.
\begin{remark}\label{rem:rem5.4}
    The proof of Theorem \ref{theo:theo5.2} shows that for a given $\epsilon>0$, the truncation parameter $N$ is determined by the relation
    \begin{displaymath}
        \epsilon=\max\big\{ |f(u(x,t))|: |x|\geq Nh, ~~t\in [0, T]\big\}.
    \end{displaymath}
    The existence of such $N$ is guaranteed by Proposition \ref{prop:prop5.3}. On the other hand,  if one has further information on the decay behavior of the solution $u(x,t)$, this  will in turn give a more explicit relation between $N$ and $\epsilon$. In Appendix \ref{sec:appendixB}, following the idea in \cite{Bona1981} we give a general decay estimate which applies for certain kernels $\beta$.
\end{remark}
\begin{remark}\label{rem:rem5.5}
    Clearly, instead of  the truncation  system (\ref{eq:trunca}) in which $-N \leq i \leq N$, we could also consider the truncation on any interval determined by $N_{0}\leq i \leq N_{1}$.   Likewise, if one a priori knows that the solution $u$ is concentrated on the interval $[a,b]$; namely that
    \begin{displaymath}
        \sup \big\{ \left\vert u(x,t)\right\vert : t\in \left[ 0,T\right] , x\notin [a, b]\big\}
    \end{displaymath}
    stays small, then Theorem \ref{theo:theo5.2} gives the same conclusion for the truncated system with $N_{0}\leq i \leq N_{1}$ where the integers $N_{0}$ and $N_{1}$ are given by $N_{0}=\lfloor {a\over h} \rfloor-1$ and  $N_{1}=\lfloor {b\over h} \rfloor+1$.  The typical example for such a behavior is when the solution $u$ is a traveling wave that is localized in space, then it would suffice to consider an asymmetric interval determined by  $N_{0}\leq i \leq N_{1}$.
\end{remark}
\begin{remark}\label{rem:rem5.6}
     Finally we want to consider the stability issue of our numerical scheme. Our approach does not involve discretizations of spatial derivatives of $u$, instead the spatial derivatives act upon the kernel $\beta$. This appears as the coefficient matrix $B^{N}$ in (\ref{eq:trunca}) and Lemma \ref{lem:lem3.2} shows that $B^{N}$ is uniformly bounded with respect to  the mesh size $h$. In other words, the factor $h^{-2}$ that appears in the standard discretization of the second derivatives is not effective in (\ref{eq:trunca}); it is already embedded in the uniform estimate of $B^{N}$. So if we further apply time discretization to (\ref{eq:trunca}) there will be no stability limitation regarding spatial mesh size. Roughly speaking, a fully discrete version of our numerical scheme will be straightforward. This is the main reason why we  prefer to use an ODE solver rather than a fully discrete scheme in the numerical experiments performed in the next section.
\end{remark}

\setcounter{equation}{0}
\section{Numerical Experiments}\label{sec:sec6}

In this section we focus on two numerical examples for the semi-discrete scheme developed in the previous sections.  The first example  is about the propagation of a single solitary wave and the second example is about finite time blow-up of solutions. The numerical examples support both the expected properties of the semi-discrete scheme and the theoretical findings for (\ref{eq:cont}). For instance, we numerically observe that the semi-discrete scheme  (\ref{eq:trunca}) is  second-order convergent in space and that (\ref{eq:cont}) has finite time blow-up solutions.

To integrate the semi-discrete system (\ref{eq:trunca}) in time we use a standard fourth-order Runge-Kutta scheme. In addition to this, instead of developing a fully discrete scheme  for (\ref{eq:trunca}),  we use, in all the experiments to be described, the Matlab solver \verb"ode45" that performs a direct numerical integration of a set of ordinary differential equations using the fourth-order Runge-Kutta method.  To ensure that the dominant error in the solution is not temporal we fix the relative and absolute tolerances for the solver \verb"ode45" to be $RelTol=10^{-10}$ and $AbsTol=10^{-10}$, respectively.

\subsection{Propagation of a single solitary wave}

We  illustrate the accuracy of our semi-discrete scheme by considering solitary wave solutions of the IB equation, a member of the class (\ref{eq:cont}), corresponding to the exponential kernel.  The IB equation  with quadratic nonlinearity has the single solitary wave solution
\begin{equation}
    u(x,t)=A~\text{sech}^{2}\big(B(x-ct-x_{0}) \big), \label{eq:solitary}
\end{equation}
with $A=3(c^{2}-1)/2$, $B=\sqrt{A}/(\sqrt{6}c)$ and $c^{2}>1$.  Equation (\ref{eq:solitary}) represents the solitary wave centered at $x_{0}$ initially and propagating in the positive direction of the $x$-axis with the constant wave speed $c$, the width $B^{-1}$ and the  amplitude $A$. We note that  the amplitude  and the width of the solitary wave are determined by  the wave speed. In (\ref{eq:solitary}), $u$  is a smooth function which decays exponentially  and approaches zero as $|x| \rightarrow \infty$, so the errors resulting from a suitable truncation of the domain are not expected to be significant. We perform time marching of (\ref{eq:trunca}) for the initial data
\begin{equation}
    u(x,0)=A\, \text{sech}^{2}\big(B (x+15) \big),~~~~ u_{t}(x,0)=2A\, B\, c\, \text{sech}^{2}\big(B (x+15) \big)\tanh \big(B (x+15) \big)
\end{equation}
with $c=1.5$ up to time $ t= 20$ over the computational domain $[-30, 30]$. With this empirical choice  of the computational domain, the truncation will yield negligible influence on the numerical results. This is due to the exponential decay of the solution; then the proof of Theorem \ref{theo:theo5.2} shows that $\epsilon={\cal O}\left(e^{-CNh}\right)$. The right and left boundaries are chosen so that the distance from the right boundary to the "crest" of the solitary wave at $t=20$ is equal to the distance from the left boundary  to the "crest" of the solitary wave at $t=0$.  We perform a discretization of the computational domain  with equal spatial step size $h$  in which $2N$ is the number of subintervals.  Figure \ref{fig:Fig1} compares  the exact and approximate solutions at $t=20$ when $h=0.125$ ($N=240$).  As can be seen from the figure, the exact and approximate solutions are almost indistinguishable. This experiment validates the ability of the semi-discrete scheme proposed to adequately capture the propagation of a single solitary wave for (\ref{eq:cont}).
\begin{figure}[h!]
    \centering
    \includegraphics[width=0.80\linewidth,scale=1.50,keepaspectratio]{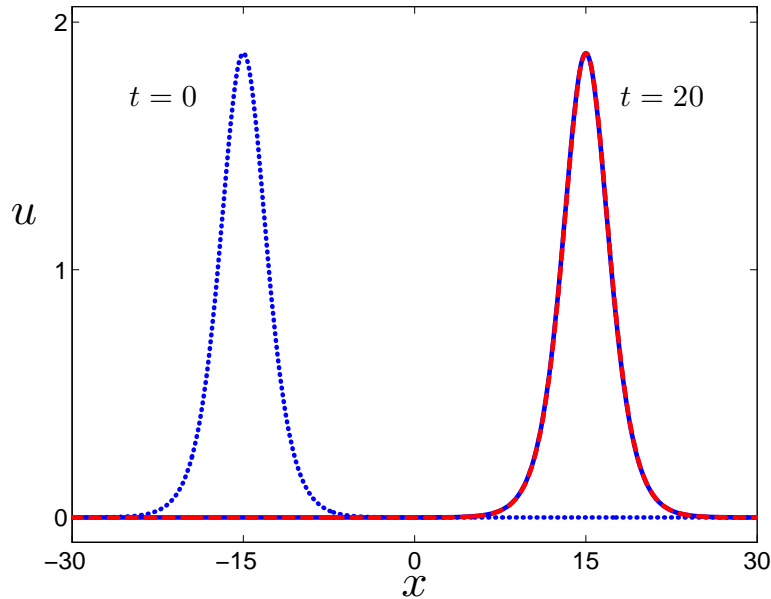}
    \caption{Propagation of a right-moving solitary wave of speed $c=1.5$ for the nonlocal nonlinear wave equation (\ref{eq:cont}) with $\beta(x)={\frac{1}{2}}e^{-|x|}$ and $f(u)=u+u^{2}$. The initial profile, the exact and the numerical solutions at   $t_{\text{max}}=20$   are shown with the dotted line, the solid line and the dashed line, respectively. The numerical solution is almost indistinguishable from the exact solution. For the numerical simulation the computational domain $[-30, \, 30]$ and the mesh size   $h=0.125$ are used.}
    \label{fig:Fig1}
\end{figure}

We now perform numerical simulations with different values of $h$ and $N$ to illustrate the convergence rate in space. In the first set of our numerical experiments we fix the computational domain to $[-30, 30]$ and change the mesh size $h$. A summary of the results of these experiments is given in  Table \ref{tab:tab1}, where the $l^{\infty}$-error $E_{h}^{N}$ at time $t=20$ and the convergence rate $\rho$ are given. The  $l^{\infty}$-error $E_{h}^{N}$ at time $t$ and the  convergence rate $\rho$ are calculated as
\begin{equation}
    E_{h}^{N}(t)=\left\Vert \mathbf{u}(t) - \mathbf{u}_{h}^{N}(t) \right\Vert_{l^{\infty}}
                =\max_{-N\leq i \leq N} \left \vert u(x_{i},t)- (\mathbf{u}_{h}^{N})_{i}\right \vert
 \end{equation}
 and
 \begin{equation}
    \rho={1\over {\ln 2}}\ln\left({{E_{h}^{N}(t)}\over {E_{2h}^{2N}(t)}}\right),
 \end{equation}
 respectively. We observe that the errors decrease rapidly as the mesh size $h$ decreases.  We also observe that numerical simulations  validate the second-order accuracy in space, which is  the  convergence rate predicted by Theorem \ref{theo:theo5.2}.
\begin{table}[tbhp]
    \caption{Variation of the $l^{\infty}$-error ($E_{h}^{N}$) computed at time $t=20$ with the mesh size ($h$) and  the corresponding convergence rates ($\rho$). Results are provided for the  solitary wave problem of the nonlocal nonlinear wave equation (\ref{eq:cont}) with the wave speed  $c=1.5$, the kernel $\beta(x)={\frac{1}{2}}e^{-|x|}$ and the quadratic nonlinearity  $f(u)=u+u^{2}$ when   the computational domain is $[-30, \, 30]$.}
    \label{tab:tab1}
   \centering
   \begin{tabular}{|c|c|c|} \hline
          $h$          & $E_{h}^{N}$ & Order \\ \hline
          2.00000   & 1.37663752E+00 & - \\
          1.00000   & 5.40121525E-01 & 1.3497 \\
          0.50000   & 1.47892030E-01 & 1.8687 \\
          0.25000   & 3.75864211E-02 & 1.9762 \\
          0.12500   & 9.43402186E-03 & 1.9942 \\
          0.06250   & 2.36067921E-03 & 1.9986 \\
          0.03125   & 5.90372954E-04 & 1.9995 \\ \hline
   \end{tabular}
\end{table}
In the second set of the numerical experiments, we  carry out the same experiments as above for different values of $N$ but with a fixed mesh size $h$. Thus, the size of the computational domain is not the same for each experiment and  its length increases as $N$ increases. A summary of the results of these experiments is given in  Table \ref{tab:tab2}, where, for each experiment, the computational domain and the $l^{\infty}$-errors ($E_{h}^{N}$) at times $t=5,10,15, 20$ are presented.  At each time in the table, we observe   a similar monotonically decreasing behavior of $l^{\infty}$-error with increasing $N$. Figure \ref{fig:Fig2} displays, on a semi-logarithmic scale, both the variation of $l^{\infty}$-error at time $t=20$ and  the variation of the maximum of the $l^{\infty}$-errors at times $t=5,10,15, 20$ with $N$ for the fixed mesh size. In the figure the two curves are almost indistinguishable. We again observe that the logarithms of the errors  decrease linearly as  $N$ increases up to a certain value of $N$ ($ \approx 220$). This is in complete agreement with our previous observation that $E_{h}^{N}={\cal O}\left(h^{2}+e^{-CNh}\right)$. Additionally we observe that above a certain value of $N$,  an increase in $N$ does not affect substantially the accuracy of the results,  which shows that cut-off errors are negligible in comparison with discretization errors.
\begin{table}[tbhp]
    \caption{Variation of the $l^{\infty}$-errors ($E_{h}^{N}$) computed at times $t=5, 10, 15, 20$ with the computational domain. The last column presents the maximum of the $l^{\infty}$-errors at times $t=5,10,15, 20$. Results are provided for the  solitary wave problem of the nonlocal nonlinear wave equation (\ref{eq:cont}) with the wave speed  $c=1.5$, the kernel $\beta(x)={\frac{1}{2}}e^{-|x|}$ and the quadratic nonlinearity  $f(u)=u+u^{2}$  when   the mesh size is $h=0.1$.}
    \label{tab:tab2}
   \centering
   \begin{tabular}{|c|c|c|c|c|c|c|c|} \hline
   $N$ &  Domain          &  $t=5$           &  $t=10$           &  $t=15$          &  $t=20$         &  Maximum \\ \hline
   160 &  $[-16, \, 16]$  &  5.696E-01  &  2.568E-01   &  2.771E-01  &  5.834E-01 &  5.834E-01  \\
   180 &  $[-18, \, 18]$  &  1.043E-01  &  7.893E-02   &  6.767E-02  &  1.127E-01 &  1.127E-01   \\
   200 &  $[-20, \, 20]$  &  2.369E-02  &  1.852E-02   &  1.606E-02  &  2.345E-02 &  2.369E-02       \\
   220 &  $[-22, \, 22]$  &  4.937E-03  &  4.203E-03   &  4.583E-03  &  6.038E-03 &  6.038E-03       \\
   240 &  $[-24, \, 24]$  &  1.702E-03  &  3.136E-03   &  4.586E-03  &  6.040E-03 &  6.040E-03       \\
   260 &  $[-26, \, 26]$  &  1.701E-03  &  3.136E-03   &  4.586E-03  &  6.040E-03 &  6.040E-03          \\
   280 &  $[-28, \, 28]$  &  1.701E-03  &  3.136E-03   &  4.586E-03  &  6.040E-03 &  6.040E-03        \\ \hline
   \end{tabular}
\end{table}
\begin{figure}[h!]
    \centering
    \includegraphics[width=0.80\linewidth,scale=1.50,keepaspectratio]{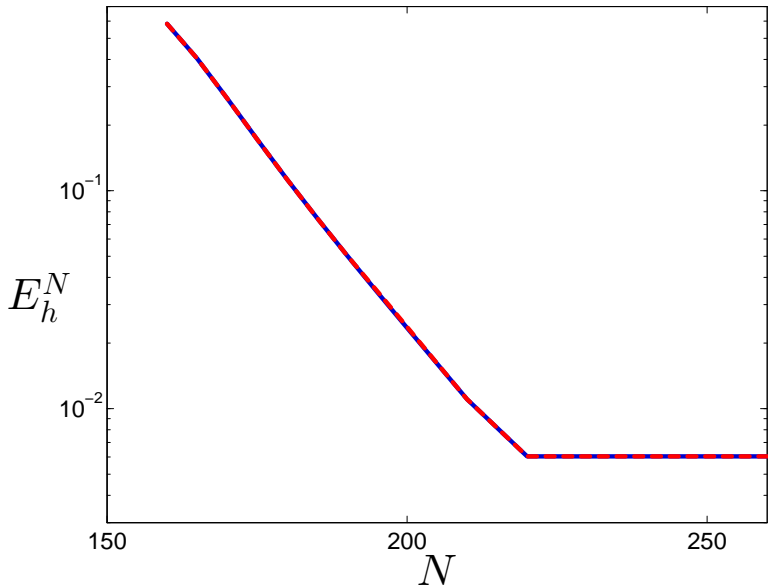}
    \caption{Variation of  the $l^{\infty}$-error ($E_{h}^{N}$) with $N$ for the  solitary wave problem of the nonlocal nonlinear wave equation (\ref{eq:cont}) with the wave speed  $c=1.5$, the kernel $\beta(x)={\frac{1}{2}}e^{-|x|}$  and the quadratic nonlinearity  $f(u)=u+u^{2}$. The $l^{\infty}$-error at time $t=20$ and the maximum of the $l^{\infty}$-errors at times $t=5, 10, 15, 20$ are shown with  the solid line and the dashed line, respectively. The two curves are almost indistinguishable. For all the numerical simulations the  mesh size  is fixed at $h=0.1$. }
    \label{fig:Fig2}
\end{figure}

\subsection{Blow-up}

In \cite{Duruk2010}, it was rigorously proved that  the class (\ref{eq:cont}) and its member (\ref{eq:imbq}) have finite time blow-up solutions under appropriate initial conditions (see Theorem 5.2 in  \cite{Duruk2010}). Moreover, as we stated in Remark \ref{rem:blow},   one can not have higher-order singularities if the amplitude stays finite. That is, if blow-up occurs it should be observed in $\left\Vert u\left( t\right) \right\Vert _{L^{\infty} }$. For this reason, in the following experiments  checking the magnitude  $\left\Vert u\left( t\right) \right\Vert _{l^{\infty} }$ suffices to show blow-up.

It is clear that, because of the infinitely large spatial and temporal gradients that exist near the blow-up time, the numerical simulation of  (\ref{eq:cont}) near the blow-up time and the numerical identification of the blow-up time are  challenging tasks. To see the semi-discrete method at work for finite time blow-up solutions we now consider  (\ref{eq:cont})   with quadratic nonlinearity and the initial conditions
\begin{equation}
    u(x,0)=4\left({2\over 3}x^{2}-1\right)e^{-x^{2}/3}, ~~~~~ u_{t}(x,0)=\left(x^{2}-1\right)e^{-x^{2}/2}. \label{eq:blowin}
\end{equation}
It is  known  that the solution of the initial-value problem (\ref{eq:cont}) and (\ref{eq:blowin})  blows up in a finite time for the exponential kernel (that is, for $\beta_{1}$ given below) \cite{Godefroy1998} and the blow-up time is about $1.8$ \cite{Borluk2017}. Our first goal  is to validate the semi-discrete method for the blow-up solution corresponding to the exponential kernel and the above initial data. Our next goal is to show  that, for several other types of kernel functions, the  solution corresponding to the same initial data blows up in a finite time.

We conduct the numerical experiments using  the following four different kernel functions:
\begin{align}
   & \mbox{(a)} \hspace*{20pt} \beta_{1}(x)=\frac{1}{2}e^{-|x|}, \label{eq:beta1} \\
   & \mbox{(b)} \hspace*{20pt}\beta_{2}(x)=\frac{1}{\pi}{\frac{1}{1+x^2}}, \label{eq:beta2} \\
   & \mbox{(c)} \hspace*{20pt}\beta_{3}(x)=\frac{1}{e^{x}+e^{-x}+2}, \label{eq:beta3} \\
   & \mbox{(d)} \hspace*{20pt}\beta_{4}(x)=\left\{ \begin{array}{ll} 1-|x|, & \mbox{if } |x|\leq 1 \\ 0, & \mbox{if } |x|> 1. \end{array} \right. \label{eq:beta4}
\end{align}%
We note that for $\beta_{4}$  (\ref{eq:cont}) reduces to the difference-differential equation arising in lattice dynamics;
\begin{equation}
    u_{tt}=f\big(u(x+1,t)\big)-2f\big(u(x,t)\big)+f\big(u(x-1,t)\big). \label{eq:lattice}
\end{equation}
All of the above kernels are symmetric and nonnegative functions of $x$.   While $\beta_{4}$ is a function with compact support on
$ \lbrack -1, 1\rbrack$, the kernels $\beta_{2}$ and $\beta_{1}, \beta_{3}$ are $C^{\infty}$ smooth functions that  decay algebraically and exponentially, respectively. We also note that the second derivatives of $\beta_{1}$ and $\beta_{4}$ involve  the delta functions at $x=0$ and at $x=-1,0,1$, respectively. So,  the order of regularization will not be the same for all these kernels and we  may list the above kernel functions in increasing  order of regularization as $\beta_{4}$, $\beta_{1}$, $\beta_{2}$ and $\beta_{3}$. We expect that  possible blow-up times increase with increasing effect of regularization.

For each of the kernel functions in (\ref{eq:beta1})-(\ref{eq:beta4}) we numerically integrate (\ref{eq:cont}) under the initial conditions  (\ref{eq:blowin}) using the Matlab solver \verb"ode45" to solve the semi-discrete scheme (\ref{eq:trunca}).  As in the previous example,  we verify that the  relative and absolute tolerances  for the Matlab solver \verb"ode45"  are sufficiently small so that the temporal discretization has no significant effect on the numerical results.  In Figure \ref{fig:Fig3} we present the approximate numerical results obtained for $\left \Vert u(t) \right \Vert_{L^{\infty}}$ using the mesh size $h=0.1$ and the computational domain $[-10, 10]$ (that is,  we present $\left \Vert \mathbf{u}_{h}^{N}(t) \right \Vert_{l^{\infty}}$ for $N=100$). Figure \ref{fig:Fig3} displays, on a semi-logarithmic scale, the variation of $\left \Vert u(t) \right \Vert_{L^{\infty}}$  with time for the  kernel functions introduced above.   For each of the kernel functions, we observe that the magnitude of $\left \Vert u(t) \right \Vert_{L^{\infty}}$ becomes arbitrarily large as time approaches the blow-up time. From these numerical experiments, we conclude that all of the   kernel functions  produce a blowing-up solution (of course, the solutions blow-up at different times). We estimate the blow-up time $t^{*}$  to be approximately $t^{*}=1.804484$, $t^{*}=2.689993$, $t^{*}=4.396459$ and $t^{*}=1.135569$ for the kernel functions $\beta_{i}$ ($i=1,2,3,4$), respectively. We observe that the order of these blow-up times are in complete agreement with the ordering based on regularization effects of  the kernels.
\begin{figure}[h!]
    \centering
    \subfloat[$\beta_{1}(x)={\frac{1}{2}}e^{-|x|}$]{\label{fig:a}\includegraphics[width=0.47\linewidth,scale=1.50,keepaspectratio]{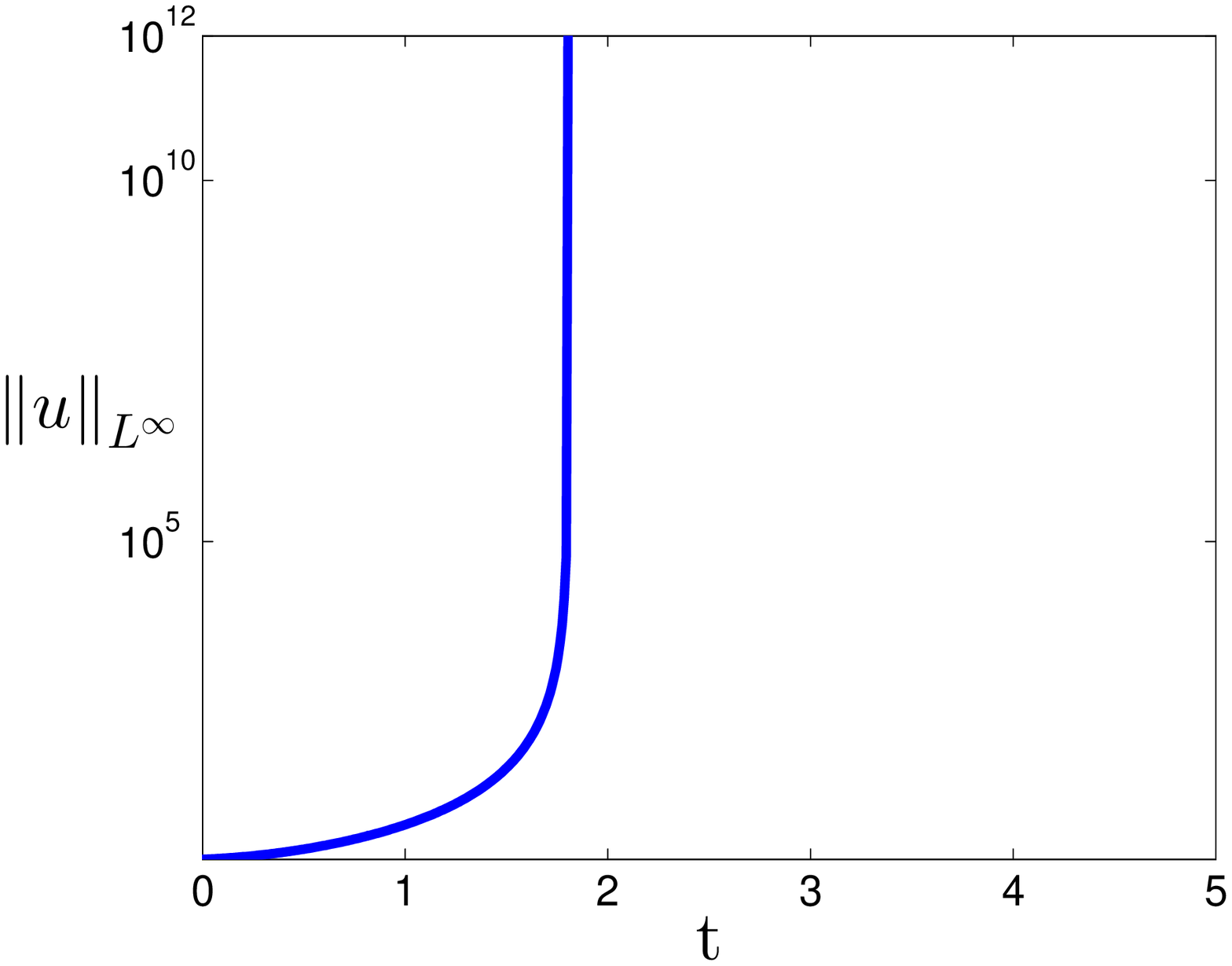}}\hspace*{10pt}
    \subfloat[$\beta_{2}(x)=\frac{1}{\pi}{\frac{1}{1+x^2}}$]{\label{fig:b}\includegraphics[width=0.47\linewidth,scale=1.50,keepaspectratio]{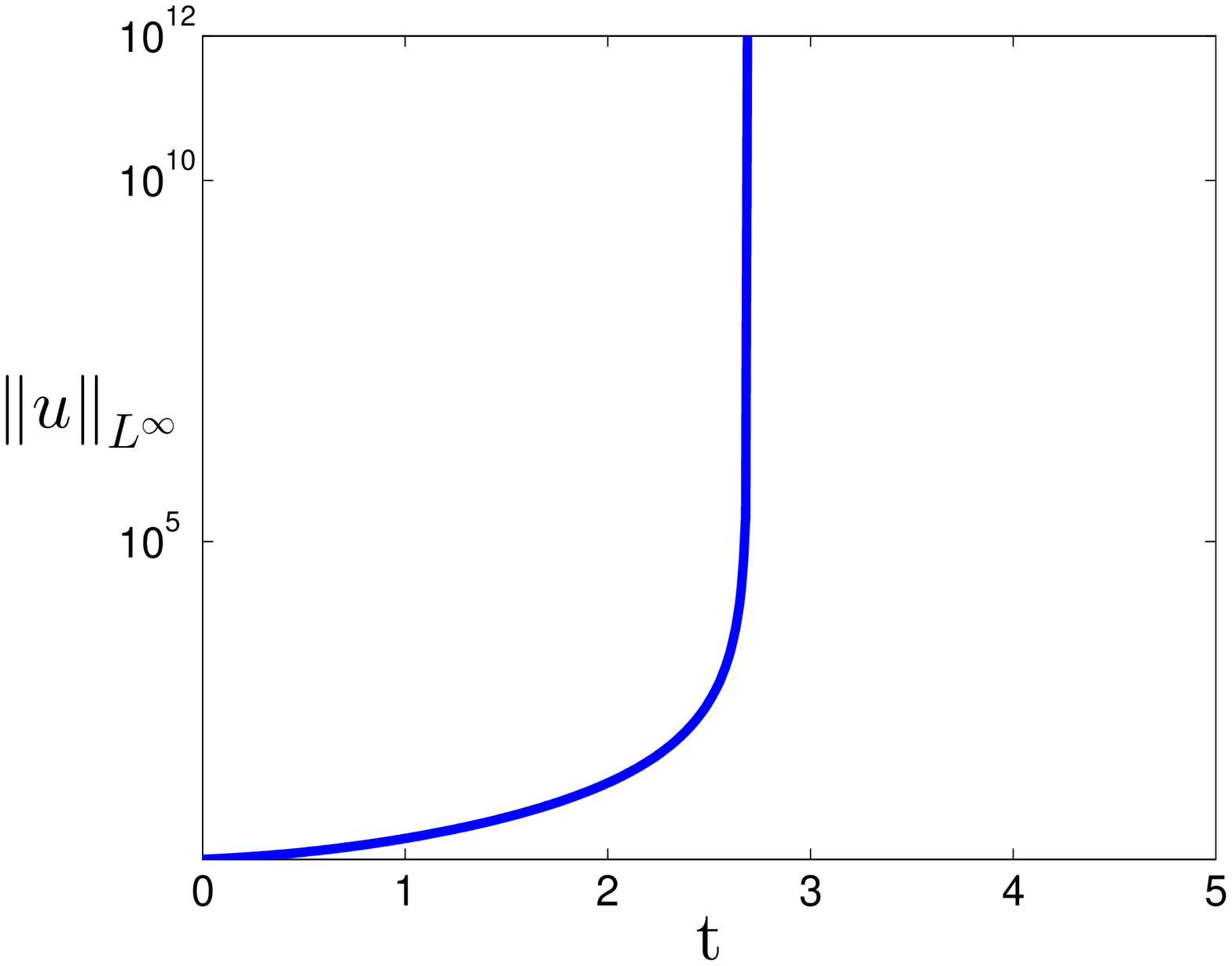}}\\
    \subfloat[$\beta_{3}(x)=\frac{1}{e^{x}+e^{-x}+2}$]{\label{fig:c}\includegraphics[width=0.47\linewidth,scale=1.50,keepaspectratio]{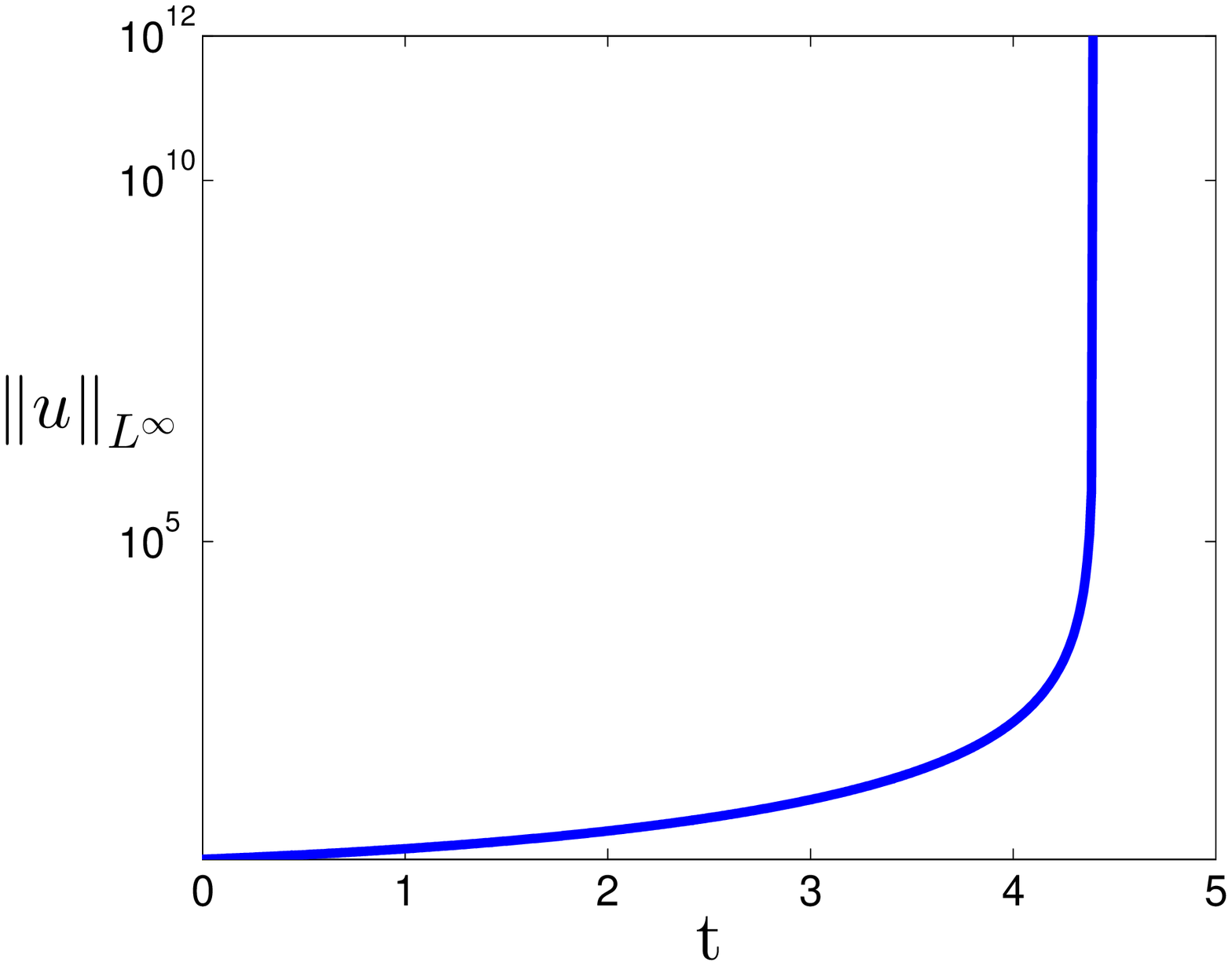}}\hspace*{10pt}
    \subfloat[$\beta_{4}(x) =  1-|x|$ if $|x|\leq 1$ and $0$ if $|x|> 1$]{\label{fig:d}\includegraphics[width=0.47\linewidth,scale=1.50,keepaspectratio]{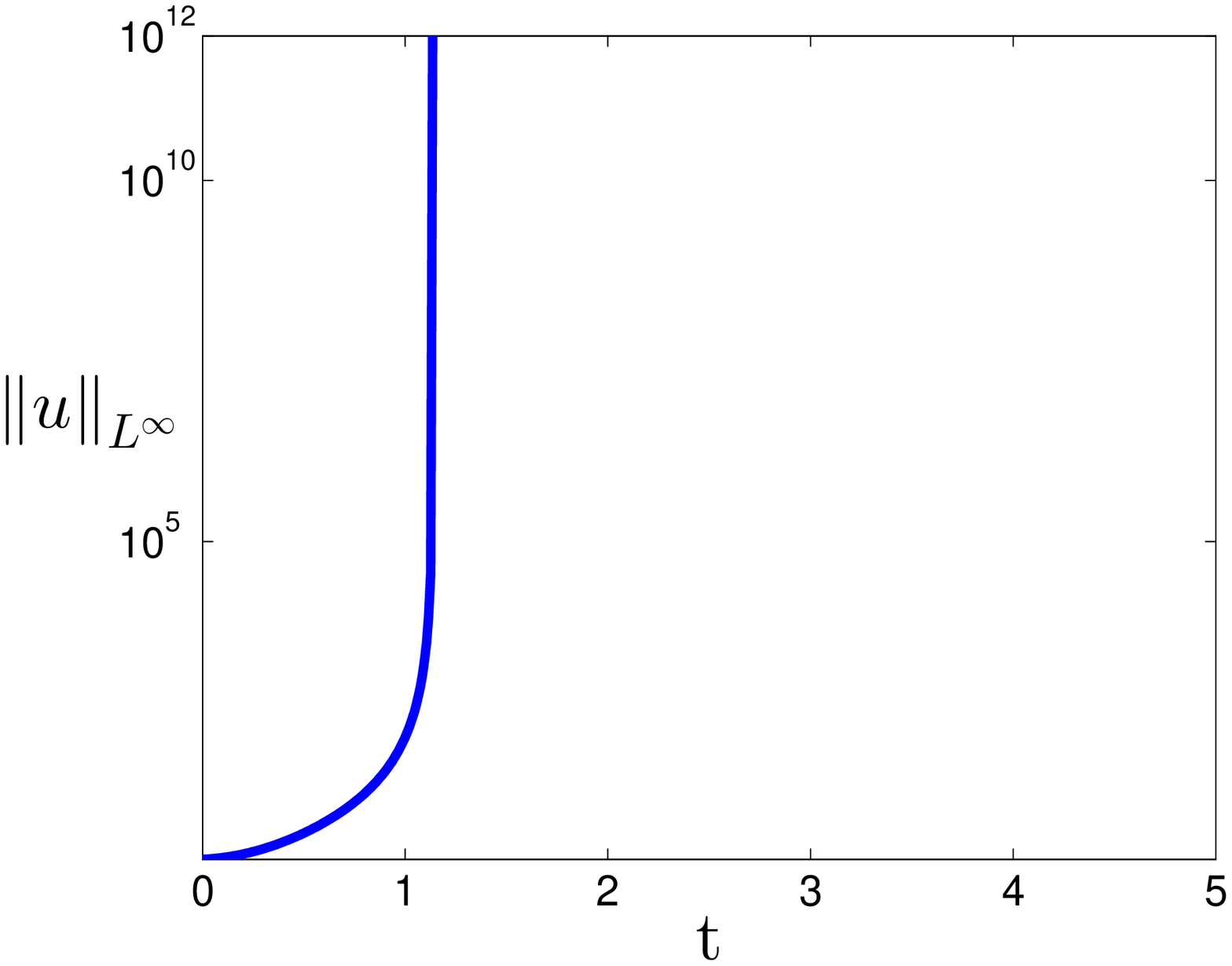}}
    \caption{Numerical approximations to the blow-up solutions of the nonlocal nonlinear wave equation (\ref{eq:cont}) for the kernel functions $\beta_{i}(x)$ ($i=1,2,3,4$) and the quadratic nonlinearity  $f(u)=u+u^{2}$. Time evolution of $\left \Vert u(t) \right \Vert_{L^{\infty}}$  is estimated by the semi-discrete scheme using the computational domain $[-10, 10]$ and the mesh size $h=0.1$. In the above plots,  the critical times when singularities develop are approximately as follows: (a) $t^{*}=1.804484$, (b) $t^{*}=2.689993$, (c) $t^{*}=4.396459$ and (d)  $t^{*}=1.135569$.}
    \label{fig:Fig3}
\end{figure}

To investigate numerically the convergence of the semi-discrete scheme in terms of the mesh size $h$ we again consider (\ref{eq:cont}) with the exponential kernel  given in (\ref{eq:beta1}). We then  solve  the equation under the initial conditions given by (\ref{eq:blowin}) for various values of $h$. Figure \ref{fig:Fig4} displays, on a semi-logarithmic scale, the variation of $\left \Vert u(t) \right \Vert_{L^{\infty}}$  with time when $h=10/N$ with $N=2,5,10,20,40,80,100$ and the computational domain is $[-10, 10]$.  In Figure \ref{fig:Fig4}, the eight curves  from right to left correspond to $N=2,5,10,20,40,80$ and $100$, respectively and the curves corresponding to $N=20,40,60,80$ and $100$ are indistinguishable. This shows that the blow-up times obtained for various $N$ converge  rapidly to the blow-up time obtained for the finest grid ($N=100$) as $N$ increases.
\begin{figure}[h!]
    \centering
    \includegraphics[width=0.80\linewidth,scale=1.50,keepaspectratio]{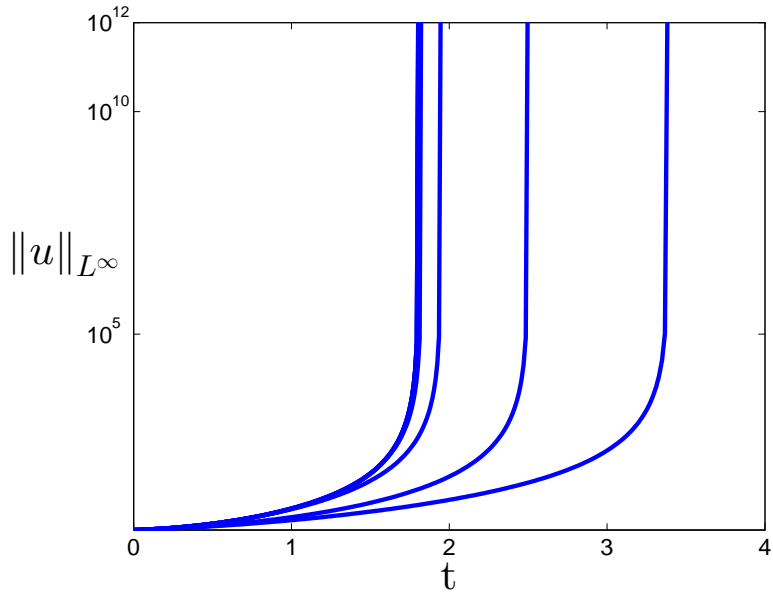}
    \caption{Convergence of the numerical approximations to the blow-up solution of the nonlocal nonlinear wave equation (\ref{eq:cont}) for  the kernel function $\beta(x)={\frac{1}{2}}e^{-|x|}$ and the quadratic nonlinearity  $f(u)=u+u^{2}$ as the mesh size $h$ decreases. Time evolution of $\left \Vert u(t) \right \Vert_{L^{\infty}}$  is estimated by the semi-discrete scheme using the computational domain $[-10, 10]$ and various mesh sizes. Each curve corresponds to  one of the different values of $h(=10/N)$, decreasing from right to left. In terms of $N$, the eight curves from right to left correspond to  $N=2,5,10,20,40,60,80$ and $100$, respectively. The curves corresponding to $N=20,40,60,80$ and $100$ are indistinguishable. The blow-up time is estimated approximately $t^{*}=1.804484$. }
    \label{fig:Fig4}
\end{figure}

\appendix
\setcounter{equation}{0}
\section{Proofs of Lemmas \ref{lem:lem2.1}, \ref{lem:lem2.6} and \ref{lem:lem2.7}}\label{sec:appendixA}
In this appendix, for completeness, we present  the technical proofs of Lemmas \ref{lem:lem2.1}, \ref{lem:lem2.6} and \ref{lem:lem2.7} about discretization errors for integrals on $\mathbb{R}$ and derivatives.
\subsection{Proof of Lemma \ref{lem:lem2.1}}
\begin{proof}
    We have
    \begin{displaymath}
         u(x)-u_{i} = \int_{x_{i}}^{x}  u^{\prime}(s)ds.
    \end{displaymath}%
    By Holder's inequality
    \begin{displaymath}
        \left\vert u(x)-P_{0}(\mathbf{u})(x)\right\vert ^{p}=\left\vert \int_{x_{i}}^{x}u^{\prime }(s)ds\right\vert^{p}
            \leq \left\vert x-x_{i}\right\vert^{\frac{p}{q}}\int_{x_{i}}^{x}\left\vert u^{\prime }(s)\right\vert ^{p}ds,
    \end{displaymath}%
    where $q$ is the dual exponent to $p$. Then
    \begin{align*}
        \Vert u-P_{0}(\mathbf{u})\Vert _{L^{p}}^{p}
            & \leq h^{\frac{p}{q}}\sum_{i}\int_{x_{i}}^{x_{i+1}}\int_{x_{i}}^{x}|u^{\prime }(s)|^{p}dsdx  \\
            & \leq h^{\frac{p}{q}+1}\sum_{i}\int_{x_{i}}^{x_{i+1}}|u^{\prime}(s)|^{p}ds  \\
            & =h^{p}\Vert u^{\prime }\Vert _{L^{p}}^{p}.
    \end{align*}%
     Furthermore, since $\Vert P_{0}(\mathbf{u})\Vert_{L^{p}}=\Vert \mathbf{u}\Vert_{l_{h}^{p}}$, we get $\mathbf{u}\in l_{h}^{p}$. To prove  (\ref{eq:est2}) we start with the identity
    \begin{displaymath}
        u(x)-P_{1}(\mathbf{u})(x)=\frac{1}{h}\int_{x_{i}}^{x}\int_{x_{i}}^{x_{i+1}}\int_{r}^{s}u^{\prime \prime }(\theta )d\theta drds,
    \end{displaymath}
    for $x_{i}\leq x<x_{i+1}$. For $\ \ x_{i}\leq x, r, s<x_{i+1}$ we have
    \begin{displaymath}
       \left\vert u(x)-P_{1}(\mathbf{u})(x)\right\vert
        \leq \frac{1}{h}\int_{x_{i}}^{x_{i+1}}\int_{x_{i}}^{x_{i+1}}\int_{x_{i}}^{x_{i+1}}\left\vert u^{\prime \prime }(\theta)\right\vert d\theta drds
        =h\int_{x_{i}}^{x_{i+1}}\left\vert u^{\prime \prime }(\theta )\right\vert d\theta .
    \end{displaymath}%
    Consequently
    \begin{displaymath}
       \left\vert u(x)-P_{1}(\mathbf{u})(x)\right\vert^{p}
        \leq h^{p(q+1)/q}\int_{x_{i}}^{x_{i+1}}\left\vert u^{\prime \prime }(\theta )\right\vert^{p} d\theta
    \end{displaymath}%
    and
    \begin{displaymath}
        \Vert u-P_{1}(\mathbf{u})\Vert _{L^{p}}\leq h^{2}\Vert u^{\prime \prime}\Vert _{L^{p}}.
    \end{displaymath}
\end{proof}
\subsection{Proof of Lemma \ref{lem:lem2.6}}
\begin{proof}
    We prove the lemma for $D^{+},$ the proof for $D^{-}$ being similar. Since
    \begin{displaymath}
        (D^{+}\mathbf{u})_{i}-(\mathbf{u}^{\prime })_{i}=\frac{1}{h}\int_{x_{i}}^{x_{i+1}}\left(x_{i+1}-s\right)u^{\prime \prime }(s)ds,
    \end{displaymath}%
    then
    \begin{displaymath}
        \left \vert (D^{+}\mathbf{u})_{i}-(\mathbf{u}^{\prime})_{i}\right \vert
        \leq \frac{1}{h}\int_{x_{i}}^{x_{i+1}}\left(x_{i+1}-s\right)\left \vert u^{\prime \prime}(s)\right \vert ds
        \leq \frac{h}{2}\Vert u^{\prime \prime }\Vert _{L^{\infty}}.
    \end{displaymath}%
\end{proof}
\subsection{Proof of Lemma \ref{lem:lem2.7}}
\begin{proof}
   From the Taylor expansion we have
    \begin{displaymath}
        \left(D^{+}D^{-}\mathbf{u}\right)_{i}-(\mathbf{u}^{\prime \prime })_{i}=\frac{1}{h^{2}}\int_{x_{i-1}}^{x_{i+1}}Q(s)u^{(4)}(s)ds,
    \end{displaymath}%
    where
    \begin{displaymath}
    Q(s)=\left\{\begin{array}{ll}
         \frac{1}{3!}(s-x_{i-1})^{3}, & ~~x_{i-1}\leq s<x_{i},  \\ ~ \\
        \frac{1}{3!}(x_{i+1}-s)^{3}, & ~~x_{i}\leq s<x_{i+1}.
        \end{array} \right.
    \end{displaymath}%
    Then
    \begin{displaymath}
        \left \vert \left(D^{+}D^{-}\mathbf{u}\right)_{i}-(\mathbf{u}^{\prime \prime })_{i}\right \vert
        \leq \frac{1}{h^{2}}  \Vert u^{(4)}\Vert _{L^{\infty}} \int_{x_{i-1}}^{x_{i+1}}Q(s)ds
        \leq \frac{h^{2}}{12}\Vert u^{(4)}\Vert _{L^{\infty}}. \nonumber
    \end{displaymath}%
\end{proof}

\setcounter{equation}{0}
\section{Decay Estimates}\label{sec:appendixB}

As it was mentioned in Remark \ref{rem:rem5.4}, for particular kernels it is possible to obtain decay estimates for the solution.  The next lemma provides such a decay estimate.
\begin{lemma}\label{lem:lemA.1}
    Let $\omega\left(x\right) $ be a positive function such that $\left(\left\vert \beta ^{\prime \prime }\right\vert \ast \omega\right)(x) \leq C\omega(x) $ for all $x\in \mathbb{R}$. Suppose that $\varphi \omega^{-1},\psi \omega^{-1}\in L^{\infty }\left(\mathbb{R}\right) $. The solution $u\in C^{2}\left([0,T], H^{s}(\mathbb{R})\right)$ of (\ref{eq:cont1})-(\ref{eq:initial}) then satisfies the estimate
    \begin{displaymath}
        \left\vert u(x,t)\right\vert \leq C\omega\left( x\right)
    \end{displaymath}
    for all  $x\in \mathbb{R}$,  $t\in \left[ 0,T\right] $.
\end{lemma}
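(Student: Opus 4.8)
The plan is to recast the problem as an integral equation in time and then close a Gronwall estimate in the weighted supremum quantity $N(t)=\sup_{x\in\mathbb{R}}|u(x,t)|/\omega(x)=\Vert u(\cdot,t)\omega^{-1}\Vert_{L^{\infty}}$. First I would transfer the two spatial derivatives onto the kernel, writing the equation as $u_{tt}=(\beta\ast f(u))_{xx}=\beta''\ast f(u)=\mu\ast f(u)$, and integrate twice in $t$ against the initial data to obtain the pointwise representation
\[
    u(x,t)=\varphi(x)+t\,\psi(x)+\int_{0}^{t}(t-\tau)\,\big(\mu\ast f(u(\cdot,\tau))\big)(x)\,d\tau .
\]
This is legitimate because $u\in C^{2}([0,T],H^{s})$ with $s>\tfrac12$ embeds into $C^{2}([0,T],C_{0}(\mathbb{R}))$, so every term is a genuine continuous function of $x$ and the convolution of the finite measure $\mu$ with the bounded continuous function $f(u(\cdot,\tau))$ is well defined.

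Next I would set $M=\max_{0\le t\le T}\Vert u(t)\Vert_{L^{\infty}}$, finite by $H^{s}\hookrightarrow L^{\infty}$, and let $L_{M}$ be the Lipschitz constant of $f$ on $[-M,M]$; since $f(0)=0$ this gives the pointwise bound $|f(u(y,\tau))|\le L_{M}|u(y,\tau)|\le L_{M}N(\tau)\,\omega(y)$. Inserting this into the convolution and using the standing hypothesis $(|\mu|\ast\omega)\le C\omega$ yields, for every $x$,
\[
    \frac{\big|(\mu\ast f(u(\cdot,\tau)))(x)\big|}{\omega(x)}
    \le \frac{(|\mu|\ast|f(u(\cdot,\tau))|)(x)}{\omega(x)}
    \le L_{M}N(\tau)\,\frac{(|\mu|\ast\omega)(x)}{\omega(x)}
    \le C L_{M}\,N(\tau).
\]
Dividing the representation by $\omega(x)$, bounding $|\varphi|/\omega$ and $|\psi|/\omega$ by the assumed $L^{\infty}$ norms, using $t-\tau\le T$, and taking the supremum over $x$ then produces the integral inequality $N(t)\le A+CL_{M}T\int_{0}^{t}N(\tau)\,d\tau$ with $A=\Vert\varphi\omega^{-1}\Vert_{L^{\infty}}+T\Vert\psi\omega^{-1}\Vert_{L^{\infty}}$. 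Gronwall's inequality then gives $N(t)\le A\,e^{CL_{M}Tt}$ on $[0,T]$, which is exactly $|u(x,t)|\le C\,\omega(x)$.

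The hard part is not this chain of inequalities but justifying that $N(t)$ is finite, so that Gronwall is applicable: the crude bound $|\mu\ast f(u)|\le|\mu|(\mathbb{R})L_{M}M$ is uniform in $x$ but useless after dividing by a decaying $\omega$, so finiteness must come from the weighted structure itself rather than from the a priori $L^{\infty}$ control. I would resolve this by a continuous-induction argument on the nondecreasing running supremum $\tilde N(t)=\sup_{0\le\tau\le t}N(\tau)$: one has $\tilde N(0)=\Vert\varphi\omega^{-1}\Vert_{L^{\infty}}<\infty$, on any initial subinterval where $\tilde N$ is finite the estimate above together with Gronwall keeps it bounded by $A\,e^{CL_{M}T^{2}}$, and feeding that bound back into the representation formula shows $N$ cannot blow up at the right endpoint of the maximal such subinterval, so finiteness propagates to all of $[0,T]$. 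An equivalent route is to re-solve the integral equation by contraction in the weighted space $\{v:\sup_{x,t}|v(x,t)|/\omega(x)<\infty\}$ over short time steps and to identify that fixed point with $u$ by uniqueness.
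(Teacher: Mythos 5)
Your argument is correct and is essentially the paper's own proof: the same twice-integrated representation $u=\varphi+t\psi+\int_{0}^{t}(t-\tau)\bigl(\beta''\ast f(u)\bigr)\,d\tau$, the same passage to the weighted quantity (the paper calls it $\Vert v(t)\Vert_{L^{\infty}}$ with $v=\omega^{-1}u$, you call it $N(t)$), the same use of the hypothesis $|\beta''|\ast\omega\le C\omega$, and the same Gronwall closure; the only cosmetic difference is that the paper factors $f(u)=k(u)u$ and bounds $k$ by $K_{M}$ on $[-M,M]$, where you invoke the Lipschitz constant $L_{M}$ together with $f(0)=0$ --- these give identical estimates. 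The one place you genuinely add something is the finiteness of $N(t)$ before Gronwall: the paper applies Gronwall to $\Vert v(t)\Vert_{L^{\infty}}$ without remarking that this quantity is not a priori finite for $t>0$ (membership $u(t)\in H^{s}$ gives no control of $u(x,t)/\omega(x)$ when $\omega$ decays), so your concern is well placed and your fix is what a fully rigorous proof requires. Of your two remedies, the short-time contraction in the weighted space, with the fixed point identified with $u$ by uniqueness in plain $L^{\infty}$, is the cleaner one; the time step there depends only on $C$ and $L_{M}$, so it iterates to cover $[0,T]$. If you run the continuous-induction variant instead, note two points of care: to restart at a later time $t_{0}$ you also need a weighted bound on $u_{t}(\cdot,t_{0})$, which follows from the once-integrated identity $u_{t}=\psi+\int_{0}^{t}\beta''\ast f(u)\,d\tau$, and pointwise finiteness of $N$ (a lower semicontinuous supremum in $t$) must be upgraded to local boundedness before Gronwall applies.
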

\begin{proof}
    It will be convenient to express the nonlinear term as $f(u)=k(u)u$. Let $M$ and $K_{M}$ be $M=\max \left\{ \left\vert u(x,t)\right\vert :t\in \left[    0,T\right], \text{ } x\in \mathbb{R}\right\} $ and $K_{M}=\max \left\{\left\vert k(u)\right\vert :\text{ }\left\vert u\right\vert \leq M\right\} $, respectively. Since $u$ satisfies
    \begin{equation}
        u\left( x,t\right) =\varphi \left( x\right) +t\psi \left( x\right)
            +\int_{0}^{t}\left( t-\tau \right) \Big(\beta^{\prime \prime }\ast \big(k(u) u\big)\Big)(x, \tau) d\tau ,  \label{eq:app1}
    \end{equation}%
    letting $v\left( x,t\right) =\omega^{-1}\left( x\right) u\left( x,t\right)$, we get
    \begin{displaymath}
        v\left( x,t\right) =\varphi \left( x\right) \omega^{-1}\left( x\right)
            +t\psi \left( x\right) \omega^{-1}\left( x\right)
            +\omega^{-1}\left( x\right)\int_{0}^{t}\left( t-\tau \right) \Big(\beta^{\prime \prime}\ast \big( k(u) \omega v\big)\Big) (x,\tau) d\tau.
    \end{displaymath}
    This gives the estimate
    \begin{equation}
        \left\Vert v\left( t\right) \right\Vert _{L^{\infty }}
        \leq \left\Vert \varphi \omega^{-1}\right\Vert _{L^{\infty }}
            +t\left\Vert \psi \omega^{-1}\right\Vert_{L^{\infty }}
            +\omega^{-1}\left( x\right) \int_{0}^{t}\left( t-\tau \right)
                \left\Vert \beta^{\prime \prime }\ast \big( k(u) \omega v\big)(\tau)\right\Vert _{L^{\infty }} d\tau . \label{eq:app2}
    \end{equation}
    But
    \begin{displaymath}
        \Big(\beta^{\prime \prime }\ast \big(k(u) \omega v\big)\Big) \left( x,\tau\right)
        =\int \beta ^{\prime \prime }\left( y\right) k\big( u\left(x-y,\tau \right) \big) \omega \left( x-y\right) v\left( x-y,\tau \right) dy,
    \end{displaymath}
    so that
    \begin{align*}
        \left\vert \Big(\beta^{\prime \prime }\ast \big(k(u) \omega v\big)\Big) \left(x,\tau \right) \right\vert
        &\leq K_{M}\left\Vert v(\tau) \right\Vert_{L^{\infty }}\int \left\vert \beta^{\prime\prime }(y) \right\vert  \omega( x-y) dy \\
        &=K_{M}\left\Vert v\left( \tau \right) \right\Vert _{L^{\infty }}\left(\left\vert\beta ^{\prime \prime }\right\vert \ast \omega\right)\left( x\right)  \\
        &\leq CK_{M}\left\Vert v(\tau) \right\Vert _{L^{\infty}}\omega(x) .
    \end{align*}%
    We note that when $\beta ^{\prime \prime }=\mu $ is a finite measure the convolution integral above should be interpreted as
    \begin{displaymath}
        \Big(\beta^{\prime \prime }\ast \big(k(u) \omega v\big)\Big) (x, \tau) =\int k\big( u(x-y,\tau) \big) \omega( x-y)v(x-y,\tau) d\mu (y),
    \end{displaymath}
    but the estimate afterwards remains valid. Replacing this estimate in (\ref{eq:app2}), for $t\in \left[ 0,T\right] $ yields
    \begin{align*}
        \left\Vert v\left( t\right) \right\Vert _{L^{\infty }}
        &\leq \left\Vert \varphi \omega^{-1}\right\Vert_{L^{\infty }}
            +t\left\Vert \psi \omega^{-1}\right\Vert_{L^{\infty }}
            +CK_{M}\int_{0}^{t}\left( t-\tau \right) \left\Vert v(\tau) \right\Vert _{L^{\infty }}d\tau . \\
        &\leq \left\Vert \varphi \omega^{-1}\right\Vert _{L^{\infty }}
            +T\left\Vert \psi \omega^{-1}\right\Vert _{L^{\infty }}+CK_{M}T\int_{0}^{t}\left\Vert v(\tau) \right\Vert _{L^{\infty }}d\tau .
    \end{align*}%
    By Gronwall's lemma we have
    \begin{displaymath}
        \left\Vert v(t) \right\Vert _{L^{\infty }}\leq \big( \left\Vert\varphi \omega^{-1}\right\Vert _{L^{\infty }}
            +T\left\Vert \psi \omega^{-1}\right\Vert_{L^{\infty }}\big) e^{CK_{M}Tt}.
    \end{displaymath}
    which in turn implies
    \begin{displaymath}
        \left\vert u(x,t)\right\vert \leq C\omega (x)
    \end{displaymath}
    for all $t\in \left[ 0,T\right]$.
\end{proof}
We now give some examples for kernels $\beta $ and weights $\omega$ satisfying the condition in Lemma \ref{lem:lemA.1}.
\begin{example}[The improved Boussinesq equation]
      The IB equation corresponds to the kernel $\beta \left(x\right) =\frac{1}{2}e^{-\left\vert x\right\vert }$. Then $\beta ^{\prime\prime }=\beta -\delta$ with the Dirac measure $\delta$. Let $\omega\left( x\right) =e^{-r\left\vert x\right\vert }$ with $0<r<1$. Then
    \begin{displaymath}
        \left\vert \beta^{\prime \prime }\right\vert \ast \omega =\beta \ast \omega +\omega
    \end{displaymath}
    and
    \begin{align*}
        \left(\beta \ast \omega\right)(x)
        &=\frac{1}{2}\int e^{-\left\vert x-y\right\vert }e^{-r\left\vert y\right\vert }dy
        =\frac{1}{2}\int e^{-(1-r)\left\vert x-y\right\vert }e^{-r\left( \left\vert x-y\right\vert +\left\vert y\right\vert \right) }dy \\
        &\leq \frac{1}{2}e^{-r\left\vert x\right\vert }\int e^{-(1-r)\left\vert x-y\right\vert }dy
        =\frac{1}{1-r}e^{-r\left\vert x\right\vert }=\frac{1}{1-r}\omega(x)
    \end{align*}%
    so
    \begin{displaymath}
        \left(\left\vert \beta ^{\prime \prime }\right\vert \ast \omega\right)(x) \leq \left( \frac{1}{1-r}+1\right) \omega\left( x\right).
    \end{displaymath}
    Then, for initial data with $\varphi \left( x\right) e^{r\left\vert x\right\vert },$ $\psi (x) e^{r\left\vert x\right\vert }\in L^{\infty }\left( \mathbb{R}\right) $, solutions of the IB equation satisfy the decay estimate
    \begin{displaymath}
        \left\vert u(x,t)\right\vert \leq Ce^{-r\left\vert x\right\vert }, ~~~~~0<r<1
    \end{displaymath}
     for all $t\in \left[ 0,T\right]$.
\end{example}
\begin{example}[The triangular kernel]
    Let $\beta \left( x\right) =\left(1-\left\vert x\right\vert \right) \chi_{\left[ -1,1\right] }(x) $ where  $\chi_{\left[ -1,1\right] }$ is the characteristic function of the interval $\left[ -1,1\right] $. Then $\beta ^{\prime \prime }(x)=\delta(x+1)-2\delta(x)+\delta(x-1)$ with the shifted Dirac measures. Again let $\omega\left( x\right) =e^{-r\left\vert x\right\vert }$ with any $r>0$. Then
    \begin{align*}
        \left(\left\vert \beta ^{\prime \prime }\right\vert \ast \omega\right)(x)
        &=\omega\left( x+1\right) +2\omega\left( x\right) +\omega\left( x-1\right)  \\
        &=e^{-r\left\vert x+1\right\vert }+2e^{-r\left\vert x\right\vert }+e^{-r\left\vert x-1\right\vert }\leq Ce^{-r\left\vert x\right\vert }.
    \end{align*}%
    Then, for initial data satisfying $\varphi \left( x\right) e^{r\left\vert x\right\vert },$ $\psi \left( x\right) e^{r\left\vert x\right\vert }\in
    L^{\infty }\left( \mathbb{R}\right) $, solutions of (\ref{eq:cont1})-(\ref{eq:initial}) will satisfy
    \begin{displaymath}
        \left\vert u(x,t)\right\vert \leq Ce^{-r\left\vert x\right\vert }, ~~~~~r>0
    \end{displaymath}
     for all $t\in \left[ 0,T\right]$.     We note that in this case the non-local equation (\ref{eq:cont}) reduces to the difference-differential equation (\ref{eq:lattice}) arising in lattice dynamics.
\end{example}

\bibliographystyle{amsplain}
\bibliography{arXiv-M2AN}
\end{document}